\newtheorem{theorem}{Theorem}
\newtheorem{lemma}[theorem]{Lemma}
\newtheorem{corollary}[theorem]{Corollary}
\theoremstyle{definition}
\newtheorem{proposition}[theorem]{Proposition}
\newtheorem{remark}[theorem]{Remark}
\numberwithin{equation}{section}
\numberwithin{theorem}{section}
\definecolor{darkblue}{rgb}{0.0,0.0,0.85}
\definecolor{darkgreen}{rgb}{0,0.5,0}
\newcommand\mydoi[1]{\href{http://dx.doi.org/#1}{doi:#1}}
\newcommand\myurl[1]{\href{#1}{#1}}
\DeclareMathOperator*{\esssup}{sup}
\def\esup{\esssup_{[0,T]}}
\def\eesup{\sup_{[0,T]}}
\def\sX{{\mathsf X}}
 \def\H{{\mathsf H}}
\def\sL{{\mathsf L}}
\def\LL{{{\mathsf L}^2}}
\def\Linf{{\mathsf L}^\infty}
\def\cL{{\mathcal L}}
\def\cN{\mathcal N}
 \def\cP{{\mathcal P} } 
\def\cQ{{\mathcal  Q}}  
 \def\bC{{\mathbf C}}
 \def\scrH{\mathscr H}
 \def\scrB{\mathscr B}
\def\rb{{\mathrm  b}}
\def\bB{\rb^{vor}}
 \def\su{u}
 \def\sv{v}
 \def\mR{{\mathbb R}}   
  \def\ep{\epsilon}
\newcommand{\rB}[1]{ {\mathrm B} \bigl(#1\bigr)}
\newcommand{\nmm}[1]{\vvvert #1 \vvvert}
\newcommand{\mm}[2]{\| #1 \|_{\H^{#2}}}
\newcommand{\wL}[1]{\| #1\|_{{{\mathsf L}^2_\DM} }}
 \def\DM{{\sigma}}
\def\Din{\DM^{-1}}
\def\Vk{\ptau^kV} 
\def\Vkl{\ptau^{k+1}V}
\def\Vm{\ptau^mV} 
\def\Vml{\ptau^{m-1}V}
 \def\vv{{\mathbf v}}
\def\vq{\vv^{\text{\tiny$Q$}}}
\def\vp{\vv^{\text{\tiny$P$}}}
\def\vl{\widetilde{\vv}}
  \def\brv{\breve{\vv}}
\def\brom{\breve{\om}}
\def\brr{\breve{r}}
\def\brR{{R}_{\pres} }
\def\om{\omega}
\def\si{\gamma}
 \def\th{\theta}
 \def\thl{\widetilde{\theta}}
 \def\bc{\big|_{\pa\Omega}}
\def\nm{{\widearrow{{\mathsf{n}}}}}
 \def\tot{\mathsf{tot}}
\def\pres{{\pi}} 
\def\hep{{h}_\ep}
\def\Eto{\mathrm{E}_{t,0}}
\def\Eo{\mathrm{E}_{0}}
\newcommand{\ave}[1]{{\overline{#1}}} 
\def\intT{\displaystyle\int_0^T}
\def\intO{\displaystyle\int_\Omega}
\def\ttdiff{\Big|_{\tau_1}^{\tau_2} }
\def\ttint{\int_{\tau_1}^{\tau_2}}
\def\ccdot{{\hspace{-0.15mm}\cdot }}
\def\nc{\nabla \ccdot }
\def\cn{ \!\cdot\!\! \nabla}
 \def\nt{\nabla\!\times\!}
\def\pa{\partial}
\def\pt{\pa_t} 
\def\px{\pa_x} 
\def\ptau{\pa_{\tau}}
 \def\be{\begin{equation}}
\def\ee{\end{equation}}
 \def\bpm{\begin{pmatrix}}
\def\epm{\end{pmatrix}}
 \def\lsm{\lesssim}
\newcommand{\rev}[1]{#1}%{{\color{red} #1}}
 \title[Accuracy of Incompressible Approximation]{Improved Accuracy of Incompressible Approximation of Compressible Euler   Equations}
\author[Bin Cheng]{Bin Cheng}
 \address{\newline
       Department of Mathematics\\
University of Surrey, 
Guildford, GU2 7XH\\
United Kingdom }
\email[Bin Cheng]{b.cheng@surrey.ac.uk}
\keywords{Incompressible approximation; compressible Euler equations;  convergence rates;  time averages; initial-boundary value problem; singular limits; multiscale PDEs; hyperbolic PDEs.}
\subjclass{35Q31 (Primary) 35L50, 76N15 (Secondary)}
\begin{document}
\maketitle 
\begin{abstract}This article addresses a fundamental concern regarding the incompressible approximation of fluid motions,  one of the most widely used  approximations in fluid mechanics. Common belief is that its accuracy  is $O(\ep)$   where  $\ep$ denotes the Mach number.   In this article,  however, we prove    an   $O(\ep^2)$ accuracy for the incompressible approximation of the isentropic, compressible Euler equations thanks to several decoupling properties. At the initial time, the  velocity field and its first time derivative are of $O(1)$ size, but the boundary conditions can be as stringent as   the  solid-wall type.  The fast acoustic waves are still     $O(\ep)$ in magnitude, since the $O(\ep^2)$ error is measured in the sense of Leray projection and more physically, in time-averages.  We also show when a passive scalar is transported by the flow, it is  $O(\ep^2)$ accurate {\it pointwise in time} to use incompressible approximation for the velocity field in the transport equation.%We develop a framework  based on the time-averaging technique and  the   vorticity equation.  
\end{abstract}  
\date{December 31, 2013}
\section{\bf Introduction and Statement of Main Theorem}
 
All fluids are compressible, which generates acoustic waves. The restoring force   is the pressure gradient which   results from the fluid being compressed  and decompressed. The Mach number, denoted by $\ep$ in our article, is defined as the typical value of the ratio of fluid speed over sound speed. In the very subsonic regime $\ep\ll1$,   incompressible (vortical) fluid motions evolve in a slower time scale than acoustic wave propagation; then,
  incompressible approximation is often adopted so that effectively acoustic waves are filtered out. Numerous applications and theoretical studies rely on the validity of such approximation that indeed offers more convenience and simplicity than the   compressible models.  
  
  Common belief is that the incompressible approximation introduces   $O(\ep)$ errors.
 In this article,  however,  we  prove    an improved  $O(\ep^2)$ error estimate  between  the isentropic, compressible      Euler equations and its incompressible counterpart, thanks to several decoupling properties. The initial data is well-prepared in the sense that its first time derivative has   $O(1)$  spatial norms, independent of the smallness of $\ep$. In a loosely equivalent way, the velocity divergence is   only $O(\ep)$   in spatial norms and acoustic waves have only $O(\ep)$ amplitudes as well. Higher time derivatives can still grow as $\ep\to0$. The central idea of time-averaging is repeatedly used to suppress the amplitude of acoustic waves by a factor of $\ep$. Intuitively, acoustic waves oscillate fast at temporal frequencies of $O(\ep^{-1})$, and therefore averaging them in time effectively cancels out the majority of oscillations.  
 
We ought to point out that the nonlinear nature of fluid motions is bound to couple fast acoustic waves with the slower   incompressible motions.  Even when all acoustic waves are completely filtered out at the initial time, they are  instantaneously generated from slow incompressible motions.   
 In the atmosphere for example,  the ubiquitous acoustic waves are emitted  all the time,  although most are inaudible to human ears\footnote{ The sound speed is around 330 meters per second in the lower atmosphere; human's hearing range starts from 20 Hertz. Therefore, we can not hear  wave lengths longer than 17 meters in our everyday life.}.
 
To this end, time-averaging plays a crucial role to further suppress   the ``unwanted'' contribution from acoustic waves to the incompressible   dynamics.  The physical relevance of time-averaging is evident from the popularity of its generalized version, time filtering. In fact,  time filtering is necessary in dealing with observational and computational   data  when the resolution of fast acoustic waves   suffers from   a wide range of factors. To make even closer connection to applications, we will use time-averaging technique to show that, if a passive scalar is transported by a velocity field governed   by the compressible Euler equations, then it is  $O(\ep^2)$ accurate {\it pointwise in time} to replace the velocity with its incompressible counterpart(s).

Our techniques are applicable to  general bounded domains  subject to the     solid-wall  boundary condition $\vv\ccdot\nm\bigr|_{\pa\Omega}=0$. Several issues arise here: 1.  nonlinear coupling of fast    and slow   dynamics  does not decay or disperse in any strong sense; 2. Fourier analysis is not applicable; 3. straightforward   energy estimates are not convenient for proving  $\ep$-independent   estimates  in $\H^m$ norms  for the solution. The last point is related to the fact that $\vv\ccdot\nm\bigr|_{\pa\Omega}=0$ does not hold for all spatial derivatives of $\vv$ and thus   the boundary integrals (multiplied by $\ep^{-1}$) resulting from the Divergence Theorem do not vanish. These issues will be resolved by relying on time-averaging,    vorticity formulation and the simple fact $\pt^k\vv\ccdot\nm\bigr|_{\pa\Omega}=0$.

\subsection{Main results} Upon rescaling and nondimensionalization, the  isentropic, compressible Euler equations are  expressed in terms of \emph{total} density $ {\rho^\tot}$ and velocity $\vv$,\be\label{Euler:rhtot}\left\{\begin{split}\pt {\rho^\tot} +\nc( {\rho^\tot}\vv)&=0,\\
 \pt\vv +\vv\cn\vv+{1\over\ep^2}\,{\nabla \pres( {\rho^\tot})\over\rho^\tot}&=0,\end{split}\right.\ee
with   Mach number $\ep\ll1$ bringing in   acoustic waves oscillating on   fast time scales. We have had the pressure law $\pres\in C^\infty(\mR^+)$ go through rescaling and affine transformation to satisfy \be\label{pres:scale} \pres(1)=\pres'(1)=1.\ee
For instance, if $\pres(\ccdot)$ satisfies the $\gamma$-power law, then by the above   assumption,   $\pres(\rho^\tot)= (\gamma-1+(\rho^\tot)^\gamma )/\gamma$. Also, it is understood that \rev{$|\rho^\tot-1|\ll1$}, so that the pressure gradient is approximated by $\nabla\rho^\tot$ and the linearized    acoustic waves  have both phase and group velocities at order ${1/ \ep}$,  namely the rescaled  sound speed. 

 Without loss of generality, we only consider a connected (but not necessarily simply connected) compact spatial domain $\Omega\subset\mR^N$ for $N=2$ or $3$, with the ``solid-wall'' boundary condition
\[\vv\ccdot\nm\bigr|_{\pa\Omega}=0\]
 where $\nm=\nm(x)$ is the outward normal to the static, smooth boundary $\pa\Omega$. The topology of $\Omega$ will occasionally be  a concern, e.g. in Remark \ref{remark:domain}.

 The main goal of this article is to estimate, in terms of $\ep$ and initial data, the size of $(\vv-\vl)$ where $\vl$ solves the incompressible Euler equations \begin{subequations}\label{Euler:in:intro}
\begin{align}\label{Euler:in:vl}  \pt \vl+\vl\cn \vl+\nabla q&=0,\\\label{Euler:in:nc}\nc \vl&=0, \\\label{Euler:in:bc} \text{subject to\qquad} \vl\ccdot\nm\bc &=0.\\\label{Euler:in:ic}\text{Initial data $\vl_0$ satisfies }&\eqref{Euler:in:nc}, \eqref{Euler:in:bc}.\end{align}\end{subequations}
Here, the scalar $q$ is an auxiliary variable, also called pressure, that enforces the incompressible condition \eqref{Euler:in:nc}. Without such a term, the above system would be overdetermined.

The spatial $\H^m$ norm is defined as usual,\[\|f(x)\|_{\H^m}:=\Bigl(\sum_{|\beta|\le m}\intO|\pa^\beta_xf(x)|^2\,dx\Bigr)^{1/2}\]
where multi-index $\beta$ indicates   orders of derivatives taken on each spatial dimension. Let $\H^m(\Omega)$ denote the closure space of smooth functions with finite $\H^m$ norms. Of course, $\LL(\Omega)=\H^0(\Omega)$.

Before stating the Main Theorem, we clarify one technical point. The time derivates at $t=0$, denoted by $\pt^k(\rho^\tot_0,\vv_0)$, can be calculated without knowing the solution for $t>0$. This is because, by repeatedly taking time derivatives on  \eqref{Euler:rhtot}, one can inductively express $\pt(\rho^\tot,\vv),\pt^2(\rho^\tot,\vv),$ \ldots $\pt^k(\rho^\tot,\vv)$   solely   in terms of $(\rho^\tot(t,x),\vv(t,x))$  and their spatial derivatives   up to the $k$-th order evaluated at each fixed time $t$.

\begin{theorem}\label{thm:main} {\bf(Main Theorem)} Let integer $m\ge3$ and   parameter $\ep\in[0,1/2]$. Consider the compressible system \eqref{Euler:rhtot} subject to initial data  $(\rho^\tot_0-1,\vv_0)\in{\H^m(\Omega)}$.   Assume $(\rho^\tot_0,\vv_0)$ is compatible with the boundary condition, namely  $ (\pt^k\vv_0) \ccdot\nm\bigr|_{\pa\Omega}=0$ for $k<m$.

Let $\vl$ solve the incompressible   system \eqref{Euler:in:intro} subject to initial data $\vl_0=\cP\vv_0$. Here,  $\cP$,  defined in \eqref{def:cp} below, denotes the Leray projection into the  incompressible velocity subspace.

Define 
\[ \Eo:=\bigl\|({\rho^\tot-1\over\ep} ,\vv_0)\bigr\|_{\H^m},\qquad \Eto:=\bigl\|\pt({\rho^\tot\over\ep},\vv_0)\bigr\|_{\H^{m-1}}\]
Then, there exist constants $E^*,T^*, C^*$ that only depend on $m$, $\Omega$ and pressure law $\pres(\cdot)$, so that with $\Eo\le E^*/\ep$,
\be\label{es:ep2}\sup_{t\in[0,{T^*/ \Eo}]}\|\cP\vv-\vl\|_{\H^{m-3}}\le C^*\,\ep^2\, (\Eto+\Eo^2)\|\cP\vv_0\|_{\H^m}.\ee
\end{theorem}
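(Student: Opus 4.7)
First, I expect the paper to establish on a lifespan $[0, T^*/\Eo]$ the $\ep$-uniform a priori bounds
\[
\sup_{[0,T^*/\Eo]} \bigl\|\bigl(\tfrac{\rho^\tot-1}{\ep}, \vv\bigr)\bigr\|_{\H^m} \lsm \Eo,
\qquad
\sup_{[0,T^*/\Eo]} \bigl\|\pt\bigl(\tfrac{\rho^\tot}{\ep}, \vv\bigr)\bigr\|_{\H^{m-1}} \lsm \Eto + \Eo^2,
\]
via a vorticity formulation of \eqref{Euler:rhtot} together with the identity $\pt^k\vv\ccdot\nm\bc = 0$ highlighted in the introduction (which circumvents the boundary obstruction to straightforward $\H^m$ energy estimates on $\vv$). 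A parallel local-existence estimate for \eqref{Euler:in:intro} gives $\|\vl\|_{\H^m} \lsm \|\cP\vv_0\|_{\H^m}$ on the same interval, after possibly shrinking $T^*$.

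Second, I would set up the forced transport equation for $w := \cP\vv - \vl$. Since $\frac{\nabla\pres(\rho^\tot)}{\rho^\tot} = \nabla h(\rho^\tot)$ for an enthalpy $h$, applying the Leray projection $\cP$ to the momentum equation in \eqref{Euler:rhtot} annihilates the $\ep^{-2}$ prefactor and leaves $\pt\cP\vv + \cP(\vv\cn\vv) = 0$. Subtracting the incompressible equation and writing $\cQ\vv := \vv - \cP\vv$, one obtains
\[
\pt w + \cP(\vl\cn w + w\cn\cP\vv) = -\cP(\cQ\vv\cn\cP\vv) - \cP(\vv\cn\cQ\vv) =: F.
\]
A Gronwall-type $\H^{m-3}$ energy estimate for this transport equation reduces the theorem to showing $\bigl\|\int_0^t F(s)\,ds\bigr\|_{\H^{m-3}} \lsm \ep^2(\Eto+\Eo^2)\|\cP\vv_0\|_{\H^m}$.

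Third (the crucial step), I would extract the factor $\ep^2$ from $\int_0^t F\,ds$ by exploiting the fast-oscillatory nature of $\cQ\vv$. The continuity and momentum equations give a scaled acoustic system
\[
\pt\bigl(\tfrac{\rho^\tot-1}{\ep}\bigr) + \tfrac{1}{\ep}\nc\cQ\vv = R_1,
\qquad
\pt\cQ\vv + \tfrac{1}{\ep}\nabla\bigl(\tfrac{\rho^\tot-1}{\ep}\bigr) = R_2,
\]
with $R_1, R_2$ controlled in $\H^{m-1}$ by the a priori bounds and the smoothness of $\pres$. Well-preparedness together with the momentum equation at $t=0$ forces $\cQ\vv_0 = O(\ep)$, and the acoustic system propagates this to $\cQ\vv = O(\ep)$ throughout $[0, T^*/\Eo]$; hence each bilinear term in $F$ is already $O(\ep)$ pointwise in time. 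The additional factor of $\ep$ comes from integrating $F$ in $t$ and performing integration by parts: using the second equation, each $\cQ\vv$ factor inside $\int_0^t F\,ds$ can be traded for $\ep$ times a time derivative of a bounded quantity plus $\ep$ times a bounded remainder; after IBP, the fast oscillations average out to produce an $O(\ep^2)$ contribution. This is the ``time-averaging'' mechanism announced in the introduction, and combined with the Gronwall step delivers \eqref{es:ep2}.

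The main obstacle is to carry out this integration-by-parts bookkeeping in $\H^{m-3}$ on a bounded domain. Each interchange between $\ep^{-1}$ and $\pt$ costs one spatial derivative (from inverting $\nabla$ or $\nc$ to recover potentials from $\cQ\vv$), the nonlinear factors in $F$ are only controlled in $\H^{m-1}$ after one differentiation, and commutators $[\cP, \vv\cn]$ produce further error terms requiring $\H^{m-3}$ product estimates. These three derivative losses are precisely what drops $\H^m$ in the hypothesis to $\H^{m-3}$ in the conclusion. Boundary integrals arising from the Divergence Theorem on $\pa\Omega$, which would be catastrophic if retained with the $\ep^{-1}$ prefactor, are rendered harmless only because of the compatibility $\pt^k\vv_0\ccdot\nm\bc=0$ for $k<m$, making this hypothesis indispensable in the high-order estimates.
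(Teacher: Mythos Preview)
Your three-step strategy matches the paper's: uniform $\ep$-independent bounds via vorticity plus time-derivative energy estimates (Section~4), the Leray-projected slow equation $\pt\vp+\cP(\vv\cn\vv)=0$ compared against the incompressible equation (Section~2), and time-averaging to extract $\ep^2$ from the forcing (Section~3). Your reduction of the Gronwall step to controlling $\bigl\|\int_0^t F\bigr\|$ rather than $\int_0^t\|F\|$ is the content of the paper's Lemma~3.2, where one subtracts $\xi:=\overline{F}$ before running the energy estimate; you allude to this but do not spell it out.

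There is one genuine omission. Your forcing $F$ contains the fast--fast term $\cP(\vq\cn\vq)$, and you propose to handle it via $\vq=O(\ep)$ pointwise, giving $\vq\cn\vq=O(\ep^2)$. That yields an $O(\ep^2)$ contribution, but one with no $\|\cP\vv_0\|_{\H^m}$ factor attached---so your final bound would read $C\ep^2(\Eto+\Eo^2)\bigl(\|\cP\vv_0\|_{\H^m}+\text{something}\bigr)$ rather than the stated \eqref{es:ep2}. The paper instead proves the algebraic identity $\cP(\vq\cn\vq)=0$ (Lemma~2.5: since $\vq=\nabla\phi$, one has $\vq\cn\vq=\tfrac12\nabla|\nabla\phi|^2$, a pure gradient killed by $\cP$). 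This makes the fast--fast term vanish \emph{exactly}, so that every surviving term in $F$ carries a factor of $\vp$, and a separate energy argument on the slow equation then bounds $\sup\|\vp\|_{\H^m}$ by $\|\cP\vv_0\|_{\H^m}$. This is what produces the factor $\|\cP\vv_0\|_{\H^m}$ in \eqref{es:ep2} and, as Remark~1.3 emphasizes, what makes both sides vanish for purely potential initial data. Your route gives the right $\ep^2$ order but not the sharp constant structure of the theorem as stated.
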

The proof is given in the last Section \ref{sec:conclude}. 

We used the clumsy notation of ${\rho^\tot-1\over\ep}$ to state the Main Theorem, as it will be replaced  throughout the rest of this article with the density perturbation\be\label{def:rh}\rho:={{\rho^\tot}-1\over\ep}.\ee
With this  notation, \be\label{E:rho}\Eo=\|(\rho_0,\vv_0)\|_{\H^{m}},\qquad \Eto=\|\pt(\rho_0,\vv_0)\|_{\H^{m-1}},\ee
so that   the Main Theorem as well as the reformulated system \eqref{Euler} from below is invariant under the hyperbolic scaling $\rho\to c\rho,\vv\to c\vv,t\to t/c,\ep\to \ep/c$ for any constant $c>0$.

The original system \eqref{Euler:rhtot} is then reformulated in terms of  unknown pair $(\rho,\,\vv)$,
\begin{subequations}\label{Euler} \begin{align}\label{Euler:rh}\pt \rho +\nc(\rho\vv)&=-\ep^{-1}{\nc\vv }\\
\label{Euler:vv}\pt\vv+\vv\cn\vv+\hep(\rho)\nabla\rho&=- { \ep}^{-1}{\nabla \rho}\end{align}\end{subequations}
\be   \text{subject to}\quad\label{Euler:BC} \vv\ccdot\nm\bigr|_{\pa\Omega}=0.\ee 
\be\label{def:hep} \text{Here,}\quad \hep(\rho):=\left({\pres'(1+\ep\rho)\over1+\ep\rho}-1\right){1\over\ep}. \ee
Note that, by \eqref{pres:scale} and Taylor expansion,  $\hep(\rho)=(\pres''(1)-\pres'(1))\rho +  O(\ep\rho^2)$. 

In a more compact form, \[ \pt\bpm\rho\\\vv\epm+\cN\bpm\rho\\\vv\epm =-\ep^{-1}\cL \bpm\rho\\\vv\epm \]
with the nonlinear operator $\cN$ clearly defined via  \eqref{Euler}, and anti-symmetric   operator \be\label{def:cL}\cL  \bpm\rho\\\vv\epm :=\bpm\nc\vv\\\nabla\rho\epm.\ee
 For purely aesthetic reasons, we will use notations $\cL  \bpm\rho\\\vv\epm$ and $\cL ( \rho, \vv)$ interchangeably.
  
 One can easily use         \eqref{Euler}  and Sobolev inequalities   to show
\be\label{E0:E1}\Bigl| \ep\bigl\|\pt(\rho ,\vv )\bigr\|_{\H^{m-1}}-\bigl\|\cL(\rho,\vv)\bigr\|_{\H^{m-1}}\Bigr|\le C\ep \bigl\|(\rho ,\vv )\bigr\|_{\H^{m}}\bigl\|(\hep(\rho),\,\vv)\bigr\|_{\H^{m}} \quad\text{ for }m\ge3,\ee
\rev{where $\hep(\rho)\sim O(\rho)$ a la Taylor expansion.}
Thus,   having the first time derivative $\pt(\rho,\vv)\sim O(1)$ is loosely equivalent to enforcing $\cL(\rho,\vv)=(\nc\vv,\nabla\rho)\sim O(\ep)$. In other words, given $\Eo\sim O(1)$,  preparing $\Eto\sim O(1)$ is loosely equivalent to having $\rho_0$ to be $O(\ep)$ close to constant and $\vv_0$ to be $O(\ep)$ close to incompressibility.

\begin{remark}\label{remark:E0:E1}There are two bounding factors in \eqref{es:ep2} that depend on initial data. Regarding the $(\Eto+\Eo^2)$ factor, we note the compressible system \rev{ \eqref{Euler} automatically enforces $\Eto\le O(\Eo/\ep+\Eo^2)$, c.f. \eqref{E0:E1}. }Then, using  ill-prepared data that allow   $\Eto\sim O(1/\ep)$ and   acoustic waves of  $O(1)$ amplitudes, we would recover the $O(\ep \Eo)$ error estimate for ill-prepared data previously proved by B. Cheng  in \cite{Cheng:SIMA}. Regarding the other factor $\|\cP\vv_0\|_{\H^m}$, in the extreme case with purely acoustic wave or potential flow initial data,  $\cP\vv=0$ is invariantly sustained by the compressible system whereas with $\vl_0=\cP\vv_0=0$, the incompressible Euler system simply yields $\vl=0$.  Then, both  sides of \eqref{es:ep2} vanish, consistent with such well known invariance. \end{remark}

\begin{remark} The local-in-time existence and uniqueness of $\bC([0,T],\H^m(\Omega))$ solution to \eqref{Euler:rhtot} has been established in \cite{Sch:Euler}.
The compatibility condition, $(\pt^k\vv_0) \ccdot\nm\bigr|_{\pa\Omega}=0$ for $k<m$, is both  necessary and sufficient for   $\H^m$ well-posedness, although this is  not  the main  focus of this article. On the other hand, it is crucial in our study to obtain $\ep$-independent upper bounds on $\|(\rho,\vv)\|_{\H^m}$ and  $\|\pt(\rho,\vv)\|_{\H^{m-1}}$. This will be   achieved in Section \ref{sec:a:priori}.\end{remark}

\begin{remark} Estimates on $\|\cP\vv-\vl\|_{\H^{m'}}$ for   $ m'\in(m-3, m)$ are   obtained by  interpolating between $\|\cP\vv-\vl\|_{\H^{m-3}}\sim O(\ep^2)$ from \eqref{es:ep2} and     $\|\cP\vv-\vl\|_{\H^{m}}\sim O(1)$.  
\end{remark}
In the final Section \ref{sec:conclude}, we will also prove the following corollary without   relying on   Leray projection. Instead, the estimate is in terms of the physically relevant time-averages.  
\begin{corollary}\label{thm:C}Under the same hypotheses as in the Main Theorem \ref{thm:main}, there exists   constants $C^{**},C^{***}$ that only depends on $m$, $\Omega$ and pressure law $\pres(\cdot)$, so that for all  times $T\in[0,T^*/\Eo]$,
\be\label{es:C} \ep\bigl\|\vv-\vl\bigr\|_{ \H^{m-3}}(T)+\bigl\|\intT \vv-\vl \bigr\|_{ \H^{m-3}}\le \,C^{**}\,\ep^2\,(\Eto+\Eo^2) \ee

Moreover, if two scalars $\th(t,x)$, $\thl(t,x)$ are transported by $\vv,\vl$ respectively
\[\begin{aligned}\pt\th+\vv\cn\th&=0,\\\pt\thl+\vl\cn\thl&=0,\end{aligned}\]
subject to the same initial data $\th_0=\thl_0\in{\H^m(\Omega)} $, then
\[\sup_{[0,{T^*/ \Eo}]}\|\th-\thl\|_{\H^{m-3}}\le \,C^{***}\,\ep^2\,(\Eto+\Eo^2)\, \|\th_0\|_{\H^m}.\]
Apparently, by \eqref{es:ep2},   the same $O(\ep^2)$ accuracy holds true if we approximate the velocity by $\cP\vv$.
\end{corollary}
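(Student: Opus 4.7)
The plan is to derive all three estimates from the single decomposition $\vv = \cP\vv + (I - \cP)\vv$, handling the slow component $\cP\vv$ by Theorem \ref{thm:main} and the fast (acoustic) component by time-averaging. For the pointwise estimate on $\ep\|\vv - \vl\|_{\H^{m-3}}$, the slow part $\cP\vv - \vl$ is bounded by $C^*\ep^2(\Eto+\Eo^2)\|\cP\vv_0\|_{\H^m}$ directly via \eqref{es:ep2}. The fast part $(I-\cP)\vv = \nabla\phi$ solves the Neumann problem $\Delta\phi = \nc\vv$ (well-posed modulo a constant, thanks to $\vv\ccdot\nm\bc =0$); rewriting $\nc\vv = -\ep(\pt\rho + \nc(\rho\vv))$ from \eqref{Euler:rh} and applying the a priori bound $\|\pt(\rho,\vv)\|_{\H^{m-1}} \lesssim \Eto + \Eo^2$ established in Section \ref{sec:a:priori}, elliptic regularity yields $\|(I-\cP)\vv\|_{\H^{m-2}} \le C\ep(\Eto + \Eo^2)$. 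Pre-multiplication by $\ep$ completes this part.

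For the time-integrated estimate, the slow contribution $\|\intT (\cP\vv - \vl)\|_{\H^{m-3}} \le T \cdot C^*\ep^2(\Eto+\Eo^2)\|\cP\vv_0\|_{\H^m}$ is already $O(\ep^2(\Eto+\Eo^2))$ since $T \le T^*/\Eo$ and $\|\cP\vv_0\|_{\H^m} \le \Eo$. For the fast contribution, $\intT (I-\cP)\vv = \nabla \intT \phi$ with $\Delta \intT \phi = \intT \nc\vv$, and integrating \eqref{Euler:rh} in time gives
\[
\intT \nc\vv\, dt = -\ep\bigl(\rho(T) - \rho_0\bigr) - \ep\intT \nc(\rho\vv)\, dt.
\]
Both right-hand terms are $O(\ep^2(\Eto+\Eo^2))$. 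For the first, the spatial mean $\ave{\rho}$ is conserved (integrate \eqref{Euler:rh} against $1$ and use $\vv\ccdot\nm\bc=0$), so $\rho(T)-\rho_0 = (\rho(T)-\ave{\rho}) - (\rho_0-\ave{\rho})$, and each zero-mean piece is bounded by Poincaré via $\|\nabla\rho\|$, which is $O(\ep(\Eto+\Eo^2))$. For the second, $\nc(\rho\vv)=\rho\,\nc\vv + \vv\cdot\nabla\rho$ is itself $O(\ep(\Eto+\Eo^2))$ pointwise, since $\nc\vv$ and $\nabla\rho$ both enjoy the a priori $O(\ep)$ bound; the factor $T \le T^*/\Eo$ then absorbs the remaining $\Eo$ coming from the product. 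Elliptic regularity finally transfers the bound from $\intT \nc\vv$ to $\|\nabla\intT \phi\|_{\H^{m-3}}$.

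For the scalar-transport estimate, introduce the primitive $I(t,x) := \int_0^t (\vv-\vl)(s,x)\,ds$, which by the previous paragraph satisfies $\sup_{[0,T]}\|I\|_{\H^{m-3}} \le C\ep^2(\Eto+\Eo^2)$. Subtracting the two transport equations and using $\vv-\vl = \pt I$ together with $\pt\thl = -\vl\cn\thl$, the source $-(\vv-\vl)\cn\thl$ is rewritten as a total time derivative plus a remainder: $-\pt(I\cdot\nabla\thl) - I\cdot\nabla(\vl\cn\thl)$. Absorbing the total derivative into the unknown by setting $\tilde\psi := (\th-\thl) + I\cdot\nabla\thl$ yields
\[
\pt\tilde\psi + \vv\cn\tilde\psi = \vv\cn\bigl(I\cdot\nabla\thl\bigr) - I\cdot\nabla\bigl(\vl\cn\thl\bigr),
\]
with $\tilde\psi(0)=0$ and right-hand side of size $O(\ep^2(\Eto+\Eo^2))\|\th_0\|_{\H^m}$ in $\H^{m-3}$. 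A standard $\H^{m-3}$ energy estimate, with the Grönwall factor controlled by the a priori $\H^m$ bounds on $\vv$ and $\vl$, yields the same bound for $\|\tilde\psi(T)\|_{\H^{m-3}}$, and the conclusion follows from $\|\th - \thl\|_{\H^{m-3}} \le \|\tilde\psi\|_{\H^{m-3}} + \|I\cdot\nabla\thl\|_{\H^{m-3}}$. The main delicacy is the second paragraph: one needs both $\rho(T)-\rho_0$ \emph{and} $\intT\nc(\rho\vv)$ to be $O(\ep(\Eto+\Eo^2))$, so that the pre-factor $\ep$ from \eqref{Euler:rh} delivers the claimed $O(\ep^2)$; this rests crucially on the a priori control $\|\cL(\rho,\vv)\|_{\H^{m-1}}\le C\ep(\Eto+\Eo^2)$ (in line with \eqref{E0:E1}) combined with the short time budget $T \le T^*/\Eo$.
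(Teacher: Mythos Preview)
Your treatment of the two pieces of \eqref{es:C} is correct and matches the paper's argument: the pointwise bound on $\vv-\vl$ comes from splitting $\vv-\vl=\vq+(\vp-\vl)$, with $\|\vq\|_{\H^{m-2}}\lsm\ep(\Eto+\Eo^2)$ via Proposition~\ref{prop:cQ} and \eqref{uni:es:2}, and $\|\vp-\vl\|_{\H^{m-3}}\lsm\ep^2$ via the Main Theorem; the time-integrated bound uses the same splitting together with the $O(\ep^2)$ control of $\ave{\vq}$ obtained exactly as in \eqref{ave:vq:cL}.

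For the passive-scalar estimate, however, there is a genuine regularity gap. Your primitive $I=\int_0^t(\vv-\vl)$ is only $O(\ep^2)$ in $\H^{m-3}$, because its slow part $\int_0^t(\vp-\vl)$ is controlled only through \eqref{es:ep2}. But the forcing $F=\vv\cn(I\cn\thl)-I\cn(\vl\cn\thl)$ in your $\tilde\psi$ equation, once expanded, contains terms like $\vv_j(\partial_jI_k)\partial_k\thl$ and $\vv_jI_k\partial_j\partial_k\thl$; bounding these in $\H^{m-3}$ requires $I\in\H^{m-2}$, one derivative more than you have. Concretely, for $m=3$ in three dimensions, $I\in\LL$, $\nabla^2\thl\in\H^1\subset L^6$ and $\vv\in L^\infty$ give $\vv_jI_k\partial_j\partial_k\thl$ only in $L^{3/2}$, not $\LL$. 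So the claim that $F$ is $O(\ep^2)$ in $\H^{m-3}$ fails as written.

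The paper avoids this by splitting \emph{before} taking the primitive: it writes $\pt\th+\vp\cn\th=-\vq\cn\th$ and applies Lemma~\ref{thm:linear:es} with $v_1=\vl$, $v_2=\vp$, $\si_2=-\vq\cn\th$. The time-averaged source then involves only $\ave{\vq}$, which is $O(\ep^2)$ in the \emph{full} $\H^m$ norm by \eqref{ave:vq:cL} and \eqref{uni:es:2}; integrating by parts in $t$ gives $\ave{\vq\cn\th}=\ave{\vq}\cn\th\big|_0^T+\int_0^T\ave{\vq}\cn(\vv\cn\th)$, comfortably bounded in $\H^{m-2}$. The slow mismatch $\vl-\vp$ appears only through the term $\rb(\vl-\vp,\th)=(\vl-\vp)\cn\th$ in Lemma~\ref{thm:linear:es}, where the low regularity $\|\vl-\vp\|_{\H^{m-3}}$ is harmlessly paired with $\|\th\|_{\H^m}$ via \eqref{b:H:3}. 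Your argument can be repaired in the same spirit: define $\tilde\psi$ using only $\ave{\vq}$ in place of $I$, and keep $(\vp-\vl)\cn\thl$ as an explicit $\H^{m-3}$ source that is already $O(\ep^2)$ pointwise in time.
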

Note in \eqref{es:C} the gained $\ep$ factor thanks to time averaging. Also, it is  $O(\ep^2)$ accurate {\it pointwise in time} to use both $\vl$ and $\cP\vv$ approximations for $\vv$ in the transport equation.

\subsection{Literature} There have been numerous results regarding the  singular limits  of compressible Euler equations and other
fluid equations  in various settings, \rev{but literature on  the $O(\ep^2)$ accuracy is limited. To our best knowledge, such results are all represented in the form of   $O(\ep),O(\ep^2)\ldots$ corrections to the incompressible approximation (e.g. \cite[Theorem 3.2]{Sch:limits}, \cite[Theorem 3]{Gallagher} for periodic domains under various ``small divisor'' conditions)}. Our result here confirms that the $O(\ep)$ correction is in fact zero for Euler equations in general spatial domains, \rev{at least for well-prepared data}. We point to two survey papers for some comprehensive lists of references:   \cite{Sch:survey} with emphases on hyperbolic partial differential equations (PDEs) and homogenization in space-time;   \cite{Mas:survey} with emphases on viscous fluids and weak solutions. To mention only a few earliest works in terms of well-prepared data, we refer to  \cite{Ebin:earlier, Ebin:Euler,Kreiss:multiscale,BDV,Kl:Maj,Tad:multiscale}.  In a closely related paper \cite{Kreiss:derivative},   the bounded derivative method is applied to numerical schemes from geophysical applications. 
Well-prepared conditions on initial data were later removed for problems in the whole space (\cite{Ukai:Rn}), in an exterior domain (\cite{Isozaki:exterior,Isozaki:wave}) and in a torus (\cite{Sch:limits}). These arguments more or less rely on use of Fourier analysis and/or dispersive nature of the underlying wave equations. 

Singular limit problems in a bounded spatial domain, on the other hand, remain much less studied.  Even the well-posedness of hyperbolic PDEs in a bounded domain can be challenging due to  the (possible)   characteristic boundary. For example, Rauch elaborated in   \cite{Rauch:char} that, in such settings,   only estimates along tangential directions   are available near the boundary. Nevertheless,   Schochet \cite{Sch:Euler} proved the   low-Mach-number limit with solid-wall boundary condition and well-prepared initial data, but without convergence rates.   In  \rev{\cite{Secchi}}, Secchi proved the strong convergence of $\cP \vv $   for 3D Euler equations with ill-prepared initial data, again without   convergence rates. Very recently, B. Cheng proved $O(\ep)$ convergence rate for ill-prepared data in \cite{Cheng:SIMA}. The time-averaging technique used there inspired this current study; also see Cheng \& Mahalov \cite{Ch:Ma} for time-averaging applied to geophysical models on a sphere.
   
\subsection{\bf \rev{Slow dynamics and vortical dynamics}}
 
Later in the article, we will apply the Leray projection to   the compressible system, which effectively annihilates $\ep^{-1}\cL$. This gives a decomposition of the solution space into slow and fast subspaces, and correspondingly a decomposition of the compressible system into a slow one governing the  incompressible motions  and a fast one   governing the rapidly oscillating   acoustic waves.
 
The slow dynamics is very closely related to the vorticity equations.  Apply $\nt$ to \eqref{Euler:vv} 
so that the cancellation property $\nt\nabla=0$  yields the   equation for vorticity $\om:=\nabla\times\vv$ \begin{subequations}\label{vorticity:Euler}
\begin{align}\label{vorticity:2DEuler} \pt\om  +\vv\cn\om +(\nc\vv)\om =&0\qquad\text{in 2D, }\\
\label{vorticity:3DEuler} \text{and }\quad\pt\om -\om\cn\vv+\vv\cn\om +(\nc\vv)\om=&0 \qquad\text{in 3D}\end{align}
\end{subequations}without $O(\ep^{-1})$ terms contributing to $\pt\om$. Thus, vorticity $\om$ evolves on a slow time scale. We have to alert that the   vortical dynamics may not contain all the information of the slow dynamics if the spatial domain is not contractible. Take Remark \ref{remark:domain} for example. A point vortex restricted in an annulus domain forms a steady solution of  incompressible  Euler equations; together with  a suitable axisymmetric   profile of $\rho$, it also solves the compressible system. However, both   its  vorticity and divergence     are identically zero;   therefore the    vorticity equation, even combined with the divergence equation, does not retain all dynamical information required to solve for $\vv$.

 Nevertheless, the vorticity equation is   widely used in practice as it has the simple structure of a transport equation which turns out to be crucial for  some estimate proofs later on.
 
The rest of this article is organized as follows. In Section 2, we define the Leray projection, prove its properties using elliptic PDE theory and use it to extract the slow dynamics from  the compressible system.   Section \ref{sec:ave} contains probably the most novelty. It explains how to use the time-averaging technique to obtain pointwise-in-time error estimates. A decoupling property particular to  the compressible Euler system will  allow us to gain an extra $\ep$ factor, provided  the  data are well-prepared. Next, without concerns for boundary, the reader can skip   Section \ref{sec:a:priori}. Here, we use mixed norms to obtain $\ep$-independent bounds on  the sizes of the solution and its first time derivative.   The methods used here are partially similar to those of \cite{Sch:general}, but we work with both ill-prepared and well-prepared data. The     final Section \ref{sec:conclude} completes   proofs of the Main Theorem \ref{thm:main}, Corollary \ref{thm:C} and makes  further comments.

\begin{remark}\label{re:Sobolev}The Sobolev inequalities used throughout this article can   be summarized as follows,   which are proven in the Appendix. Given  functions $f_1( x), f_2( x),\ldots,f_j( x)$ over a two or three dimensional compact domain $\Omega$,    
 we have estimate  \be\label{gen:Sob}\Bigl\|\prod_{i=1}^j\px^{\beta_i}f_i \Bigr\|_{\LL}\lsm \prod_{i=1}^j\Bigl\|f_i\Bigr\|_{\H^k} \ee
 if   one of the following conditions hold
 \be\label{cond:k}\left\{\begin{aligned}&|\beta_1+...+\beta_j|\le k,\;\;\text{ and }\;\; k\ge2,\;\;\text{ or }\\
 &|\beta_1+...+\beta_j|= k+1,\;\; \max\{|\beta_1|,\ldots,|\beta_j|\}\le k,\;\;\text{ and }\;\; k\ge3.\end{aligned}\right.\ee
Here and below, the ``similarly less than'' notation $a\lsm b$ is understood as
``$a\le Cb$  for a constant $C$ solely depending on   $\Omega$,  the pressure law $\pres(\ccdot)$ and the Sobolev spaces involved''.\end{remark}

\section{\bf Projections onto Slow and Fast Subspaces}\label{sec:proj}

Define $\sX$ to be the space of incompressible velocity fields subject to solid-wall boundary condition, %\[\sX:= \LL\text{ closure of }\left\{\vv^\text{inc}\in \bC^1(\overline{\Omega})\,\Big|\,\nc\vv^\text{inc}=0,\;\vv^\text{inc}\ccdot\nm\bc=0\right\}\]
\rev{%By using   testing   functions, we see that 
\be\label{sX:testing}\begin{aligned}\sX&= \LL\text{ closure of }\left\{\vv\in \bC^1(\overline{\Omega})\,\Big|\,\intO \vv\cn f=0\text{ for any }f\in \H^1(\overline\Omega)\right\}\end{aligned}\ee
}
 
Define $\cP$ as the $\LL$-orthogonal projection onto  $\sX $ so that, for any $\vv,\vv'\in \LL(\Omega)$, \begin{subequations}\label{def:cp} \begin{align}\label{def:cp:1} \cP^2\vv=\cP\vv \in \sX,&\\
\label{def:cp:2}\intO(\vv-\cP\vv)\ccdot(\cP\vv')&=0.\end{align}\end{subequations}
In fact, $\cP$ is the classical Leray projection subject to solid-wall boundary condition. Then, define 
\[\cQ:=1-\cP.\]

 These projections can  be characterized conveniently by an elliptic PDE as follows.
\begin{proposition}\label{thm:cQ} For any $\vv\in \H^k(\Omega)$ with $k\ge1$, we have  \be\label{def:cQ}
{\cQ\vv=\nabla\phi} \in \H^k(\Omega)\text{ \;\;where\;\; }\phi\mbox{ solves }\left\{\begin{split}\Delta\phi&=\nc\vv&\text{in }\Omega, \\\nabla\phi\cdot\nm&=\vv\ccdot\nm&\text{in } {\pa\Omega}.\end{split}\right.\ee
  Also, $\phi$ is unique up to  an added constant  and thus $\cQ\vv$ is unique.
 \end{proposition}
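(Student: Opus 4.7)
The plan is to construct $\phi$ by solving the Neumann problem in \eqref{def:cQ} and then identify $\nabla\phi$ with $\cQ\vv$ via the uniqueness of the $\LL$-orthogonal decomposition $\vv=\cP\vv+\cQ\vv$ that defines the Leray projection.

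First, solvability of the Neumann problem reduces to the compatibility condition $\intO\nc\vv\,dx=\int_{\pa\Omega}\vv\ccdot\nm\,dS$, which is exactly the Divergence Theorem applied to $\vv\in\H^1(\Omega)$. Standard elliptic regularity for the Neumann Laplacian on the smooth bounded domain $\Omega$, with data $\nc\vv\in\H^{k-1}(\Omega)$ and $\vv\ccdot\nm\bc\in\H^{k-1/2}(\pa\Omega)$, then yields a solution $\phi\in\H^{k+1}(\Omega)$, unique modulo additive constants; in particular $\nabla\phi\in\H^k(\Omega)$. Uniqueness up to a constant follows because the difference of two solutions satisfies the homogeneous Neumann problem $\Delta\psi=0$, $\nabla\psi\cdot\nm=0$, whose only solutions are constants.

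Next, set $\vv_*:=\vv-\nabla\phi$. For any $f\in\H^1(\Omega)$, integration by parts together with the two conditions in \eqref{def:cQ} give
\[\intO\vv_*\ccdot\nabla f\,dx=-\intO(\nc\vv-\Delta\phi)f\,dx+\int_{\pa\Omega}(\vv-\nabla\phi)\ccdot\nm\,f\,dS=0.\]
Via the characterization \eqref{sX:testing}, this places $\vv_*$ in $\sX$. On the other hand, $\nabla\phi\in\sX^\perp$: for every $\ww$ in the $\bC^1$-generating set of \eqref{sX:testing}, the choice of test function $f=\phi\in\H^1$ gives $\intO\ww\ccdot\nabla\phi=0$, and this orthogonality is preserved under the $\LL$-closure that defines $\sX$. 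Uniqueness of the $\LL$-orthogonal decomposition then forces $\cP\vv=\vv_*$ and $\cQ\vv=\nabla\phi$, with the claimed $\H^k$ regularity of $\cQ\vv$ inherited from $\phi\in\H^{k+1}$.

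The main technical point is verifying that $\vv_*$ actually lies in $\sX$, rather than merely in the $\LL$-orthogonal complement of all gradients: since $\sX$ is defined in \eqref{sX:testing} as an $\LL$-closure of $\bC^1$ vector fields, one needs the standard Helmholtz-type density fact that smooth tangential divergence-free fields are dense in this complement. This is where the smoothness of $\pa\Omega$ enters most essentially; once this density is in hand, the rest of the argument is routine elliptic theory.
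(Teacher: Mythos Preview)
Your proof is correct and follows essentially the same route as the paper: solve the Neumann problem via standard elliptic theory, then verify that $\vv-\nabla\phi\in\sX$ and $\nabla\phi\perp\sX$, which together identify $\nabla\phi$ with $\cQ\vv$. The only notable difference is that you explicitly flag the density issue (that an $\H^k$ field satisfying the integral condition in \eqref{sX:testing} actually lies in the $\LL$-closure defining $\sX$), whereas the paper absorbs this into the word ``obviously''; your caution here is well placed but does not constitute a different approach.
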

 Here and below, we always assume $k\ge1$ whenever the trace of an $\H^k(\Omega)$ function is involved.
\begin{proof}
The solvability   of \eqref{def:cQ}   follow  from standard elliptic PDE theory (e.g. \cite{Taylor:I}, Ch. 5, Prop. 7.7). It suffices to verify that $(1-\cQ)$ with $\cQ$ given in \eqref{def:cQ} equals $\cP$ which is uniquely defined in \eqref{def:cp:1}, \eqref{def:cp:2}.

Obviously, \eqref{def:cQ} implies $ (1-\cQ) \vv\in \sX$  for any $\vv\in \H^k(\Omega)$.
Then, for  any  $\vv^\text{inc}\in\sX\bigcap \H^k(\Omega)$, the uniqueness   of   $\phi$ in \eqref{def:cQ}, up to an added constant, implies  that  $\cQ\vv^\text{inc}=0$. In short,  $(1-\cQ)$ is a projection so that, for any $\vv\in \H^k$,   $(1-\cQ)^2 \vv=(1-\cQ)\vv\in\sX$.

It remains to show \eqref{def:cp:2} i.e. that $\cQ \vv$ is $\LL$-orthogonal to any  $\vv^\text{inc}\in\sX$. Since $\sX\bigcap \bC^\infty(\overline{\Omega})$   is   dense in $\sX $ in the $\LL$ topology, it suffices to have     $\vv^\text{inc} $ as a  smooth element of $\sX$. Then, \[\intO \vv^\text{inc}\ccdot(\cQ\vv)\,dx=\intO\vv^\text{inc}\ccdot\nabla\phi\,dx\stackrel{ {(a)}}{=}\intO\nc(\vv^\text{inc}\phi)\,dx\stackrel{ {(b)}}{=}\int_{\pa\Omega}\nm\ccdot \vv^\text{inc}\phi\,ds\stackrel{ {(c)}}{=}0,\] where $(a),(c)$ are due to the definition of $\vv^\text{inc}$ and $(b)$ due to the Divergence Theorem.
\end{proof}
This proposition shows that $\cQ\vv$  is always a perfect gradient and therefore its curl vanishes.  
\begin{align}\label{cKcq}\nt(\cQ\vv)=0,&\qquad \nt(\cP\vv)=\nt \vv .\\
\intertext{Compare them to the definitional facts,}
\nonumber \nc(\cP\vv)=0,&\qquad\nc(\cQ\vv)=\nc\vv.\end{align}
This is to say, $\cP\vv$ contains all the information of the vorticity (but not necessarily vice versa!) and  $\cQ\vv$ contains all the information of the divergence.

 From here on,  we will interchangeably use   $\vp$ for $\cP\vv$ and  $\vq$ for $\cQ\vv$.

\subsection{Boundedness of projections and elliptic estimates}

 Operators $\cL,\nabla\times, \cP,\cQ$ are all elliptic operators with nontrivial kernels, and we will employ elliptic estimates with boundary conditions to estimate them. The papers of Agmon, Douglis and Nirenberg \cite{elliptic:1}, \cite{elliptic:2} establish a ``Complementing Boundary Condition'' that is necessary and sufficient for the solution operator of an $s$-th order elliptic PDE system to be $\bC^k\to\bC^{k+s}$ and $\H^k\to \H^{k+s}$. To treat the Euler equations (e.g. \cite{BB:Euler}),  only a particular case is used: for any velocity field $\vv\in\H^1(\Omega)$,
\be\label{elliptic:bc}\|\vv\|_{\H^k}\lsm\|\nc\vv\|_{\H^{k-1}}+\|\nt\vv\|_{\H^{k-1}}+\|\vv\|_{\LL},\quad\text{if }\vv\ccdot\nm\bc=0 \text{ and }k\ge1.\ee 

 \begin{remark}\label{remark:domain}We alert that the $\|\vv\|_{\LL}$ term above  may not be dropped if, for example, $\Omega$ is not contractible. Consider an annulus domain $ \Omega=\{(x,y)\,\big|\,1<x^2+y^2<2\}$ and a point vortex $\vv=\nabla^\perp\ln\bigl| x^2+y^2\bigr|$.   Then, $\nabla\ccdot\vv=\nabla\!\times\!\vv=0$ and $\vv\ccdot\nm\bc=0$, but $\vv\ne0$.
 \end{remark}

Now,   set $\vv=\vp$ in  \eqref{elliptic:bc} and use the   facts that $\nc\vp=0$, $\vp\ccdot\nm\bc=0$ to  obtain, without   boundary condition on $\vv$,
\be\label{es:cp:L2}\|\vp\|_{\H^k}\lsm \|\nt\vp\|_{\H^{k-1}}+\|\vp\|_{\LL}\quad\text{ for }k\ge1.\ee
This gives a bound on the high norms of $\vp$ using the high norms of $\nt\vp=\nt\vv$ (by \eqref{cKcq}) and the $\LL$ norm of $\vp$. 

Therefore, $\cP$, $\cQ$ are bounded operators in $\H^k(\Omega)$ regardless of boundary condition,
 \be\label{es:cp:cq} \|\vv\|_{\H^k}\le\|\cP\vv\|_{\H^k}+\|\cQ\vv \|_{\H^k}\lsm \|\vv\|_{\H^k}\quad\text{ for }k\ge0.\ee
 The first inequality is apparently due to  $\cP+\cQ=1$. Also, for the second inequality,
the case of $k=0$ is due to the definition of $\cP$ and the Pythagorean Theorem. 

Similar to \eqref{es:cp:L2}, we can bound the norms of $\vq$ using norms of $\nc\vq=\nc\vv$. In fact, set $\vv=\vq$ in \eqref{elliptic:bc},      use \eqref{cKcq} and the fact $\vv\ccdot\nm\bc=\vq\ccdot\nm\bc$ to obtain
\be\label{es:cQ:L2}\|\vq\|_{\H^k}\lsm\|\nc\vq \|_{\H^{k-1}}+\|\vq\|_{\LL}\quad\text{ if }\quad \vv\ccdot\nm\bc=0\ee 

A notable feature of the above inequality is, unlike in Remark \ref{remark:domain}, the $\LL$ norm term above can be dropped regardless of topology of the spatial domain.  
\begin{proposition}\label{prop:cQ}Let $k\ge1$. For any $\vv\in \H^k(\Omega)$,\[\| \vq\|_{\H^k}\lsm\|\nc\vq\|_{\H^{k-1}}=\|\nc\vv\|_{\H^{k-1}}\quad\text{ if }\quad \vv\ccdot\nm\bc=0.\]\end{proposition}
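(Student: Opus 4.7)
\smallskip

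The plan is to prove the proposition by showing that the $\|\vq\|_{\LL}$ term in \eqref{es:cQ:L2} can be absorbed into $\|\nc\vv\|_{\H^{k-1}}$ whenever $\vv\ccdot\nm\bc=0$. Once this is established, the claim follows by plugging the bound into \eqref{es:cQ:L2} and using $\nc\vq=\nc\vv$. The additional boundary condition $\vv\ccdot\nm\bc=0$ is crucial, because it turns the mixed Neumann datum in Proposition~\ref{thm:cQ} into a genuinely homogeneous Neumann condition for the potential $\phi$, and this is what will save us relative to the cautionary example in Remark~\ref{remark:domain}.

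First I would recall from Proposition~\ref{thm:cQ} that $\vq=\nabla\phi$, and observe that under the hypothesis $\vv\ccdot\nm\bc=0$ the boundary condition in \eqref{def:cQ} becomes $\nabla\phi\ccdot\nm\bc=0$. Since $\phi$ is only unique up to an additive constant, I would fix the representative with $\intO\phi\,dx=0$. Then a standard Poincar\'e inequality on the connected domain $\Omega$ gives
\be\label{plan:poincare}\|\phi\|_{\LL}\lsm\|\nabla\phi\|_{\LL}=\|\vq\|_{\LL}.\ee

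Next I would estimate $\|\vq\|_{\LL}^2$ via integration by parts, which is where the homogeneous Neumann condition pays off:
\[\|\vq\|_{\LL}^2=\intO\vq\ccdot\nabla\phi\,dx=-\intO(\nc\vq)\phi\,dx+\int_{\pa\Omega}(\vq\ccdot\nm)\phi\,ds=-\intO(\nc\vv)\phi\,dx,\]
where the boundary term drops because $\vq\ccdot\nm\bc=\nabla\phi\ccdot\nm\bc=0$, and we used $\nc\vq=\nc\vv$. Applying Cauchy--Schwarz and \eqref{plan:poincare} yields $\|\vq\|_{\LL}^2\lsm\|\nc\vv\|_{\LL}\|\vq\|_{\LL}$, hence $\|\vq\|_{\LL}\lsm\|\nc\vv\|_{\LL}\le\|\nc\vv\|_{\H^{k-1}}$. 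Substituting into \eqref{es:cQ:L2} completes the proof.

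I do not expect any genuine obstacle here; the statement is essentially an elliptic regularity result for the pure Neumann Laplacian, which is well-posed on a connected domain modulo constants because the compatibility condition $\intO\nc\vv\,dx=\int_{\pa\Omega}\vv\ccdot\nm\,ds=0$ holds automatically from $\vv\ccdot\nm\bc=0$. The one point deserving care is that Poincar\'e's inequality requires the zero-mean representative of $\phi$, which is a legitimate normalization because only $\nabla\phi=\vq$ matters in the final estimate.
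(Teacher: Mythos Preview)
Your proof is correct and follows essentially the same strategy as the paper: reduce via \eqref{es:cQ:L2} to showing $\|\vq\|_{\LL}\lsm\|\nc\vv\|_{\LL}$, then use integration by parts together with the boundary condition and Poincar\'e's inequality on a zero-mean potential. The only cosmetic difference is that the paper phrases the $\LL$ bound as a duality argument---testing $\vq$ against an arbitrary smooth field $\vv'=\nabla\psi+\cP\vv'$ and integrating by parts on $\vv$ using $\vv\ccdot\nm\bc=0$---whereas you compute $\|\vq\|_{\LL}^2$ directly and integrate by parts on $\vq=\nabla\phi$ using the induced homogeneous Neumann condition $\nabla\phi\ccdot\nm\bc=0$.
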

\begin{proof} 
Having \eqref{es:cQ:L2} established,  it suffices to show $\|\vq\|_{\LL}\le C_\Omega\|\nc\vv\|_{\LL}$.  

Take any smooth test velocity field $\vv'$. By   Proposition \ref{thm:cQ},   $\vv'=\nabla\psi+\cP\vv'$. Then,   the orthogonality of $\cP$, $\cQ$ implies  $\intO\vq\ccdot\vv'=\intO\vv\ccdot\nabla\psi.$ 
    Apply  the Divergence Theorem and $\vv\ccdot\nm\bc=0$ on the RHS to get $\intO\vq\ccdot\vv'= -\intO(\nc\vv )\psi.$ Now, since $\Omega$ is compact,  we can  set $\intO\psi=0$, and apply  the H\"older and Poincar\'e inequalities, 
\[\intO\vq\ccdot\vv' \le C_\Omega\|\nc\vv\|_{\LL}\|\nabla\psi\|_\LL\le C_\Omega\|\nc\vv\|_{\LL}\|\vv'\|_\LL,\]
where the last inequality is due to     $\|\nabla\psi\|_\LL=\|\cQ\vv'\|_\LL $ and the Pythagorean Theorem. This shows $\|\vq\|_{\LL}\le C_\Omega\|\nc\vv\|_{\LL}$.  
  \end{proof}
 
  \subsection{Slow and Fast dynamics}

Next, we want to extract the slow dynamics from \eqref{Euler} in the form of an evolutionary system that is free of  $O(\ep^{-1})$ time derivative. One could apply $\nt$ to cancel $\ep^{-1}\cL$ and get the vorticity equations \eqref{vorticity:2DEuler} or \eqref{vorticity:3DEuler}, but the comments thereafter suggests that the vortical dynamics does not necessarily retain all the information of the slow dynamics. For such a reason, we will instead apply  $\cP$ on \eqref{Euler:vv}. The intuition is, if we define $\cP^\sharp\bpm\rho\\\vv\epm:=\bpm0\\\cP\vv\epm$ to make $\cL\cP^\sharp=0$ and to make $\cL$, $\cP^\sharp$   (skew-)symmetric, then    hopefully they commute   $\cP^\sharp\cL=0$ and therefore applying $\cP^\sharp$ to \eqref{Euler} will  eliminate the $\ep^{-1}\cL$ term.  This is easily proved using adjoint operators if $\pa\Omega=\emptyset$, and can still be established in general provided the boundary conditions are taken care of. 
  \begin{proposition}\label{thm:cPcL}
  For any scalar $\rho\in \H^1(\Omega)$,\[   \quad\cP(\nabla \rho)=0.\]
  \end{proposition}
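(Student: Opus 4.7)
The plan is to show directly that $\nabla\rho$ is $\LL$-orthogonal to every element of $\sX$, from which $\cP(\nabla\rho)=0$ follows immediately by definition of the orthogonal projection. The work reduces to carefully unpacking the definition of $\sX$ in \eqref{sX:testing}.

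First I would take an arbitrary element $\vv$ of the defining (pre-closure) set
\[
\left\{\vv\in \bC^1(\overline{\Omega})\,\Big|\,\intO \vv\cn f=0\text{ for any }f\in \H^1(\overline\Omega)\right\}.
\]
Since $\rho\in\H^1(\Omega)$ is itself an admissible test function $f$, I can specialize $f=\rho$ to obtain $\intO \vv\cn\rho=0$ for every such smooth $\vv$. Note that no integration by parts is needed here, which is important because $\rho$ carries no boundary condition — the boundary issue has already been absorbed into the definition of $\sX$, where the testing functional is formulated against the gradient directly.

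Next I would extend this orthogonality to the full space $\sX$ by an $\LL$-density argument. For any $\vv^{\text{inc}}\in\sX$, there is a sequence $\vv_n$ in the pre-closure set with $\vv_n\to\vv^{\text{inc}}$ in $\LL$. Since $\nabla\rho\in\LL(\Omega)$, the linear functional $\vv\mapsto\intO \vv\ccdot\nabla\rho$ is $\LL$-continuous (Cauchy--Schwarz), so passing to the limit gives $\intO \vv^{\text{inc}}\ccdot\nabla\rho=0$. Thus $\nabla\rho\in\sX^\perp$ in the $\LL$ sense.

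Finally, since $\cP$ is the $\LL$-orthogonal projection onto $\sX$ (see \eqref{def:cp:1}, \eqref{def:cp:2}), applying it to any element of $\sX^\perp$ yields zero; in particular $\cP(\nabla\rho)=0$. I do not expect any substantive obstacle: the only subtle point is making sure the testing functional in \eqref{sX:testing} can be applied with $f=\rho$ under merely $\H^1$ regularity, which is precisely the regularity in the statement. An alternative route would be to invoke Proposition \ref{thm:cQ} with $\vv=\nabla\rho$ and observe that $\phi=\rho$ solves the Neumann problem $\Delta\phi=\Delta\rho$, $\nabla\phi\ccdot\nm=\nabla\rho\ccdot\nm$, forcing $\cQ(\nabla\rho)=\nabla\rho$ and hence $\cP(\nabla\rho)=0$; however this requires $\nabla\rho\in\H^1$, so the density argument above is preferable for the hypothesis $\rho\in\H^1(\Omega)$.
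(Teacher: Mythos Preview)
Your proof is correct and follows essentially the same approach as the paper: both exploit the testing definition \eqref{sX:testing} of $\sX$ with $f=\rho$ to obtain $\intO\vv\ccdot\nabla\rho=0$ for all $\vv\in\sX$, the only cosmetic difference being that the paper computes $\|\cP(\nabla\rho)\|_{\LL}^2=\intO\cP(\nabla\rho)\ccdot\nabla\rho=0$ directly, while you show $\nabla\rho\in\sX^\perp$. Your explicit density argument is a welcome clarification of a step the paper leaves implicit.
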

  \begin{proof}  By orthogonality of $\cP$, $\cQ$, we have
  \[\intO\cP(\nabla\rho)\ccdot \cP(\nabla\rho)=\intO\cP(\nabla\rho)\cn\rho\]
 which is zero due to $\cP(\nabla\rho)\in\sX$ satisfying \eqref{sX:testing}.\end{proof}
Thus, we apply $\cP$ on \eqref{Euler:vv} to obtain the slow dynamics,
\be\label{UP:B:fast-fast}  -\pt \vp= \cP\bigl(\vv\cn\vv\bigr)= \cP\bigl(\vp\cn\vp\bigr)+ \cP\bigl(\vp\cn\vq+\vq\cn\vp\bigr)+ \cP\bigl(\vq\cn\vq\bigr) \ee
where we also used the fact that $\hep(\rho)\nabla\rho$ is a perfect gradient and therefore   is annihilated by $\cP$ according to the   above proposition.

On the other hand,   apply  $\cQ$ to \eqref{Euler:vv} and keep  \eqref{Euler:rh}   to  obtain the fast dynamics
\begin{subequations}\label{UQ} \begin{align}\label{UQ:rh}\pt \rho +\nc(\rho\vv)&=-\ep^{-1}{\nc\vv }\\
\label{UQ:vv}\pt\vq+\cQ(\vv\cn\vv)+\hep(\rho)\nabla\rho&=- \ep^{-1}{\nabla \rho }\end{align}\end{subequations}
This way, the original system is decomposed into   \eqref{UQ} governing the fast variables $\rho, \vq$  with   $O(\ep^{-1})$ coefficients, and   \eqref{UP:B:fast-fast} governing the slow variable $\vp $ whose first    time derivative is   $O(1)$. Note  density component is identically zero in the slow variable.

A key {\bf decoupling} property   is that the ``fast-fast'' product in \eqref{UP:B:fast-fast}  vanishes completely.
\begin{lemma}\label{ff:lemma}For any $\vq\in \H^2 ( {\Omega})$,
\[ \cP\bigl(\vq\cn\vq\bigr)=0.\]
\end{lemma}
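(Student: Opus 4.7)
The plan is to exploit the fact that $\vq$ is curl-free, which turns the convective product $\vq\cn\vq$ into a perfect gradient; the conclusion then follows immediately from Proposition \ref{thm:cPcL}, which asserts that $\cP$ annihilates gradients.

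First, I would invoke Proposition \ref{thm:cQ} to write $\vq = \nabla\phi$ for some scalar potential $\phi$ (defined up to an additive constant), with $\phi \in \H^3(\Omega)$ because $\vq \in \H^2(\Omega)$. Next, I would verify the pointwise identity $\vq\cn\vq = \nabla\bigl(\tfrac12 |\vq|^2\bigr)$ by direct coordinate computation rather than invoking a 3D-specific cross-product identity (this avoids a 2D/3D case split): for each component $j$,
\[
(\vq\cn\vq)_j \;=\; \sum_i \vq_i\,\pa_i \vq_j \;=\; \sum_i (\pa_i\phi)(\pa_i\pa_j\phi) \;=\; \pa_j\!\Bigl(\tfrac12\sum_i (\pa_i\phi)^2\Bigr) \;=\; \pa_j\!\bigl(\tfrac12 |\vq|^2\bigr),
\]
where we used the symmetry $\pa_i\pa_j\phi = \pa_j\pa_i\phi$, which is justified by $\phi \in \H^3$.

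At this point I would check that $\tfrac12|\vq|^2 \in \H^1(\Omega)$ so that Proposition \ref{thm:cPcL} applies. This follows from the Sobolev product inequality \eqref{gen:Sob} with $j=2$, $k=2$, and total derivative order $\le 1$: in two or three dimensions $\H^2(\Omega)$ is an algebra, so $|\vq|^2 \in \H^2 \subset \H^1$. Finally, applying Proposition \ref{thm:cPcL} to the gradient $\nabla(\tfrac12|\vq|^2)$ yields
\[
\cP(\vq\cn\vq) \;=\; \cP\bigl(\nabla(\tfrac12|\vq|^2)\bigr) \;=\; 0,
\]
which is exactly the claim.

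There is no serious obstacle here; the only points worth being careful about are the mild regularity check (ensuring $|\vq|^2 \in \H^1$ so that Proposition \ref{thm:cPcL} is applicable at the stated regularity $\vq \in \H^2$) and the decision to verify the gradient identity by direct index computation rather than through a vector-calculus identity, since the latter takes different forms in 2D and 3D whereas the index calculation is dimension-agnostic. This lemma is really a clean algebraic consequence of the structural fact that $\cQ$ produces curl-free fields, combined with the annihilation property of $\cP$ on gradients already established.
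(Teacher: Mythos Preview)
Your proof is correct and follows essentially the same route as the paper: write $\vq=\nabla\phi$ via Proposition~\ref{thm:cQ}, recognize $\vq\cn\vq=\tfrac12\nabla|\nabla\phi|^2$ as a perfect gradient, and annihilate it with Proposition~\ref{thm:cPcL}. You are simply more explicit than the paper about the index computation and the $\H^1$ regularity check on $|\vq|^2$, both of which are welcome but not substantively different.
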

\begin{proof}
By Proposition \ref{thm:cQ},  there exists a   scalar function $\phi$ so that  $\vq=\nabla \phi.$ Then, $\vq\cn\vq=(\nabla \phi)\cn(\nabla\phi)={1\over2}\nabla|\nabla\phi|^2$. So,  by Proposition \ref{thm:cPcL}, $ \cP\bigl(\vq\cn\vq\bigr)$ vanishes.
\end{proof}
Therefore,  we rewrite \eqref{UP:B:fast-fast}  as
\be  \label{UP:B} -\pt \vp = \cP\bigl(\vp\cn\vp\bigr)+ \cP\rB{ \vp, \vq},\ee
where bilinear operator  $$\rB{\vv_1,\vv_2}:=\vv_1\cn\vv_2+\vv_2\cn\vv_1.$$
If we set $\vq\equiv0$ by brutal force, \eqref{UP:B} would be {\it formally} reduced to  the incompressible Euler equations.
\begin{proposition}\label{thm:equiv}
Consider a  velocity field  $\vl\in \bC([0,T^*]\times\H^3(\Omega))$. Then, $\vl$ solves the {\it actual} { incompressible} Euler equations \eqref{Euler:in:vl}, \eqref{Euler:in:nc}, \eqref{Euler:in:bc} if and only if it solves
\be  \label{Euler:in:cp} -\pt \vl = \cP\bigl(\vl\cn\vl\bigr) \ee
with  the same   initial data $\vl_0$ satisfying \eqref{Euler:in:ic}.\end{proposition}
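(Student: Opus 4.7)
The statement is an ``if and only if'' relating the classical incompressible Euler system---with pressure $q$ enforcing $\nc\vl=0$---to its Leray-projected reformulation. Both directions rest on two ingredients already established in Section \ref{sec:proj}: (i) $\cP$ annihilates gradients (Proposition \ref{thm:cPcL}), and (ii) any sufficiently regular vector field admits the orthogonal decomposition $\mathrm{Id}=\cP+\cQ$ with $\cQ\vv=\nabla\phi$ for some scalar $\phi$ (Proposition \ref{thm:cQ}). My plan is to treat the two implications separately.

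For the forward direction, suppose $\vl$ solves \eqref{Euler:in:vl}--\eqref{Euler:in:bc}. Since $\nc\vl=0$ and $\vl\ccdot\nm\bc=0$, we have $\vl(t)\in\sX$ for every $t$; hence $\cP\vl=\vl$, and because $\cP$ is a bounded linear operator on $\LL$, it commutes with $\pt$, yielding $\cP\pt\vl=\pt\vl$. Applying $\cP$ to \eqref{Euler:in:vl} then produces
\[
\pt\vl+\cP\bigl(\vl\cn\vl\bigr)+\cP\nabla q=0,
\]
and Proposition \ref{thm:cPcL} eliminates the last term, giving \eqref{Euler:in:cp}.

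The backward direction is the subtler half and is where I expect the main obstacle: given only the projected equation \eqref{Euler:in:cp}, I must recover the constraints $\nc\vl=0$, $\vl\ccdot\nm\bc=0$ and construct a pressure $q$. First I would argue that $\vl(t)\in\sX$ for all $t\in[0,T^*]$. The right-hand side of \eqref{Euler:in:cp} lies in $\mathrm{Ran}(\cP)\subset\sX$, so $\pt\vl\in\bC([0,T^*],\sX)$; since $\sX$ is a closed linear subspace of $\LL$ and $\vl(t)=\vl_0+\int_0^t\pt\vl(s)\,ds$ with $\vl_0\in\sX$ by \eqref{Euler:in:ic}, it follows that $\vl(t)\in\sX$ for every $t$. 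The $\H^3$ regularity then promotes this to the pointwise statements $\nc\vl=0$ and $\vl\ccdot\nm\bc=0$, i.e.\ \eqref{Euler:in:nc} and \eqref{Euler:in:bc}. To construct the pressure, I would apply Proposition \ref{thm:cQ} to the vector field $\vl\cn\vl\in\H^2$, producing $\phi\in\H^3$, unique up to a constant, such that $\cQ(\vl\cn\vl)=\nabla\phi$. Setting $q:=-\phi$ and decomposing $\vl\cn\vl=\cP(\vl\cn\vl)+\cQ(\vl\cn\vl)$, I can combine with \eqref{Euler:in:cp} to obtain
\[
\pt\vl+\vl\cn\vl+\nabla q=\pt\vl+\cP(\vl\cn\vl)=0,
\]
which is \eqref{Euler:in:vl}. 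The key technical point throughout is the closedness of $\sX$, which lets us propagate the incompressibility and boundary conditions in time; all other steps are direct algebraic consequences of the Helmholtz-type decomposition from Section \ref{sec:proj}.
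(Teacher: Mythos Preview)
Your proof is correct and, in its overall architecture, matches the paper's: both directions hinge on $\cP\nabla q=0$ (Proposition~\ref{thm:cPcL}) and on $\cQ(\vl\cn\vl)=\nabla\phi$ (Proposition~\ref{thm:cQ}) to supply the pressure.

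The only notable difference is in how you propagate the constraints in the ``if'' direction. The paper argues pointwise: it takes the divergence of \eqref{Euler:in:cp} to obtain $\pt(\nc\vl)=0$, and separately restricts \eqref{Euler:in:cp} to $\pa\Omega$ and dots with $\nm$ to get $\pt(\vl\ccdot\nm)=0$ there; both then vanish by the initial condition. You instead argue abstractly: $\pt\vl=-\cP(\vl\cn\vl)\in\sX$, and since $\sX$ is a closed subspace of $\LL$, the Bochner integral $\vl(t)=\vl_0+\int_0^t\pt\vl$ stays in $\sX$. Your route is slightly cleaner and handles divergence and boundary condition simultaneously, at the cost of invoking closedness of $\sX$ and the (easy) fact that an $\H^3$ element of $\sX$ genuinely satisfies $\nc\vl=0$ and $\vl\ccdot\nm\bc=0$; the paper's route is more hands-on and avoids any functional-analytic appeal. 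Both are entirely standard and equally valid here.
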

\begin{proof} `` Only if ''. Assume $\vl$ solves \eqref{Euler:in:vl}, \eqref{Euler:in:nc}, \eqref{Euler:in:bc}. Apply $\cP$ on \eqref{Euler:in:vl}. On the LHS, because $\pt\vl$ also satisfies \eqref{Euler:in:nc}, \eqref{Euler:in:bc},     we have $\cP(\pt\vl)=\pt\vl$ by the definition of $\cP$. Also, we have $\cP(\nabla q)=0$ by Proposition \ref{thm:cPcL}.  Therefore, applying $\cP$ on \eqref{Euler:in:vl} gives us exactly \eqref{Euler:in:cp}.

`` If ''. Assume $\vl$ solves \eqref{Euler:in:cp}, which can be recast as
\[ \pt \vl +\vl\cn\vl -  \cQ\bigl(\vl\cn\vl\bigr)=0  \]
By Proposition \ref{thm:cQ}, there exists a scalar $\phi$ so that  $\cQ\bigl(\vl\cn\vl\bigr)=\nabla\phi$ and therefore the above equation is of the same form as \eqref{Euler:in:vl} with pressure $q=-\phi $.  Next,  since \eqref{Euler:in:ic} ensures $\nc\vl_0=0$ and taking divergence on \eqref{Euler:in:cp} gives $\pt(\nc\vl)=0$, we have $\nc\vl=0$, i.e. \eqref{Euler:in:nc} satisfied for all   $t\in [0,T^*]$. Finally, restrict \eqref{Euler:in:cp} on $\pa\Omega$, take its dot product with $\nm$ and use the definition of $\cP$ to obtain
\(\pt(\vl\ccdot\nm)=0\)  on \(\pa\Omega.\)
Since \eqref{Euler:in:ic} ensures $\vl_0\ccdot\nm\bc=0$, we have $\vl\ccdot\nm\bc=0$, i.e. \eqref{Euler:in:bc} validated for all   $t\in [0,T^*]$.
\end{proof}
Problem is, in order to estimate the difference of \eqref{UP:B} and \eqref{Euler:in:cp}, how can we bound the ``slow-fast'' term  $\cP\rB{\vp, \vq}$ by $O(\ep^2)$? Because of nonlinearity, the slow subspace $\ker\cL$ is not invariant under the coupled slow-fast dynamics. This means, even with initial data $\vq_0=0$ and $\vp_0\sim O(1)$,  nonlinear coupling can lead to $\vq\sim O(\ep)$   in later times. 

To this end, we bring to focus the key idea of this article:   a generic compressible solution $\vv$,  without being $O(\ep^2)$ pointwise in time,   can still be $O(\ep^2)$  in terms of its {\it time-averages} as long as $\vq_0\sim O(\ep)$ initially. Such estimate in turn will suffice to make $(\vp-\vl)\sim O(\ep^2)$ pointwise in time. This is the subject of the next   section. 

\section{\bf Pointwise-in-time Error Estimates Using Time-averages}\label{sec:ave}
In this section,   we demonstrate in Lemma \ref{thm:ave:cb} and Theorem \ref{thm:ave:es} the crucial role of time-averages in estimating    $(\vp-\vl)$ {\bf pointwise in time}.  
For brevity, throughout this section, we   assume solutions $\rho,\vv,\vl\in \bC([0,T], \H^m(\Omega))$ for integer $m\ge3$.  

Define the time-averaging (indeed, integrating) operator
\[\ave{\vv}(T,x):=\int_0^T\vv(t,x)\,dt\]
 
First,   estimate   the  slow-fast product $\rB{\vp,\vq }$ of \eqref{UP:B}, which is  the extra term compared to the incompressible system  \eqref{Euler:in:cp}. By the product rule, $ { \rB{\vp,\vq }}= \pt {\rB{\vp,\ave{\vq}}}-{\rB{\pt \vp,\ave{\vq}}}$, so we  apply time averaging,
\[ \begin{aligned}\ave{ \rB{\vp,\vq }}(t)= &\,{\rB{\vp,\ave{\vq}}}\Big|_0^t-\int_0^t{\rB{\pt \vp,\ave{\vq}}}\\=&\,{\rB{\vp,\ave{\vq}}}\Big|_0^t+\int_0^t{\rB{\cP({\vp\cn\vp})+\cP\rB{\vp,\vq},\,\ave{\vq}}}\end{aligned}\]where    $\pt\vp$ was replaced via the slow dynamics \eqref{UP:B}.
Apply the bounds  of $\cP$,   $\cQ$ as in \eqref{es:cp:cq} and the Sobolev inequalities \eqref{gen:Sob} to obtain,\be\label{es:ave:cb}  \eesup\|\ave{ \rB{\vp,\vq }}\|_{  \H^{m-2} } \lsm \esup\Bigl\{\|\ave{\vq}\|_{\H^{m}}  \|\vp\|_{\H^{m }}(1+T  \|\vv\|_{\H^m}) \Bigr\}\ee  
Here,      the   $\|\ave{\vq}\|_{\H^{m}}$ factor measures the size of time-averaged fast variable, for which we will prove a crucial   $O(\ep^2)$ upper bound. In fact,  by Proposition \ref{prop:cQ}, 
 \[\|\ave{\vq}\|_{\H^{m}}\lsm \|\ave{\nc\vv}\|_{\H^{m-1}}\lsm\|\nabla(\ave{\nc\vv})\|_{\H^{m-2}}\]
 where the last estimate is due to the Poincar\'{e} inequality and   the zero spatial mean of $\nc\vv $. Now,  replace    $\nabla(\ave{\nc\vv})$ on the RHS using the  continuity equation \eqref{UQ:rh},
 \be\label{es:ave:vq}\|\ave{\vq}\|_{\H^{m}}(T)\lsm \ep\left\|\nabla(\rho(T,\ccdot)-\rho_0)+\nabla\intT{ \nc(\rho\vv)} \right\|_{\H^{m-2}}.\ee 
 
 Already, we have gained an $\ep$ factor   in the bound of $\ave{\vq}$. But this is not enough for $O(\ep^2)$.  A key {\bf decoupling property} here    is that the quadratic terms   in the RHS   contain    no ``slow-slow'' product which would have made $O(\|\vp\|^2)\sim O(1)$ contribution. Instead, everything in the RHS of \eqref{es:ave:vq}  has a factor    from the fast variables $\rho,\vq$, or more precisely   $(\nabla\ccdot\vq, \nabla\rho)=\cL(\rho, \vv)$.  Thus, combine \eqref{es:ave:vq} and Sobolev inequalities  \eqref{gen:Sob}  to get \be\label{ave:vq:cL}\|\ave{\vq}\|_{\H^{m}}(T)\lsm\ep\, \eesup\Bigl\{\bigl\|\cL(\rho, \vv)\bigr\|_{\H^{m-1}} (1+T \|(\rho,\vv)\|_{\H^{m}})\Bigr\}.\ee
 Plug it into  \eqref{es:ave:cb}  to   prove the following lemma.
 \begin{lemma}\label{thm:ave:cb}Let integer $m\ge3$. Suppose the compressible Euler equations \eqref{Euler} admit   solution $(\rho,\vv)\in \bC([0,T],\H^m(\Omega))$. Then, there exists a constant $C$ solely depending on $m$, $\Omega$, so that
 \[\begin{split}\eesup\|\ave{ \rB{\vp,\vq }}\|_{  \H^{m-2} }\le C\,\ep\,  & \eesup\Bigl\{\bigl\|\cL(\rho, \vv)\bigr\|_{\H^{m-1}} \|\vp\|_{\H^{m }}(1+T  \|(\rho,\vv)\|_{\H^m})^2\Bigr\}\end{split}\]
\end{lemma}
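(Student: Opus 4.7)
The proof follows the blueprint already laid out in the paragraphs preceding the lemma statement, so the plan is essentially to make those sketched computations rigorous and combine the bounds cleanly.

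The first step is the time-averaging identity. Using the product rule $\rB{\vp,\vq}=\pt\rB{\vp,\ave{\vq}}-\rB{\pt\vp,\ave{\vq}}$ and then integrating in time, one obtains
\[
\ave{\rB{\vp,\vq}}(t)=\rB{\vp,\ave{\vq}}\Big|_0^t+\int_0^t \rB{\cP(\vp\cn\vp)+\cP\rB{\vp,\vq},\,\ave{\vq}}\,ds,
\]
where I have substituted $-\pt\vp=\cP(\vp\cn\vp)+\cP\rB{\vp,\vq}$ from the slow dynamics \eqref{UP:B}. Applying the boundedness \eqref{es:cp:cq} of $\cP$ in $\H^k$ together with the Sobolev product inequality \eqref{gen:Sob} (with $k=m-2\ge1$, using $m\ge 3$) to the bilinear and trilinear terms yields estimate \eqref{es:ave:cb}:
\[
\eesup\|\ave{\rB{\vp,\vq}}\|_{\H^{m-2}}\lsm \esup\Bigl\{\|\ave{\vq}\|_{\H^m}\|\vp\|_{\H^m}(1+T\|\vv\|_{\H^m})\Bigr\}.
\]

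The second step is to produce a sharp bound on $\|\ave{\vq}\|_{\H^m}$ that carries an explicit factor of $\ep$ and is controlled purely by $\cL(\rho,\vv)$. Since $\vv\ccdot\nm|_{\pa\Omega}=0$, Proposition \ref{prop:cQ} gives $\|\ave{\vq}\|_{\H^m}\lsm \|\nc\ave{\vv}\|_{\H^{m-1}}$. Observing that $\nc\vv$ has vanishing spatial mean (by the Divergence Theorem and the boundary condition), the Poincaré inequality yields $\|\nc\ave{\vv}\|_{\H^{m-1}}\lsm \|\nabla\ave{\nc\vv}\|_{\H^{m-2}}$. Now the continuity equation \eqref{UQ:rh} can be rewritten $\ep^{-1}\nc\vv=-\pt\rho-\nc(\rho\vv)$, so integrating in time supplies the crucial $\ep$:
\[
\|\ave{\vq}\|_{\H^m}(T)\lsm \ep\,\bigl\|\nabla(\rho(T)-\rho_0)+\nabla\intT\nc(\rho\vv)\bigr\|_{\H^{m-2}}.
\]
This is \eqref{es:ave:vq}. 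Both terms on the right can be bounded in $\H^{m-2}$ by $\|\nabla\rho\|_{\H^{m-1}}$ and by $\|\nc(\rho\vv)\|_{\H^{m-1}}$; the latter expands by the Leibniz rule into pieces each of which contains a factor of $\nabla\rho$ or $\nc\vv$, i.e.\ a component of $\cL(\rho,\vv)$. Applying \eqref{gen:Sob} to these products, we obtain \eqref{ave:vq:cL}:
\[
\|\ave{\vq}\|_{\H^m}(T)\lsm \ep\,\eesup\Bigl\{\|\cL(\rho,\vv)\|_{\H^{m-1}}(1+T\|(\rho,\vv)\|_{\H^m})\Bigr\}.
\]

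Finally, I plug \eqref{ave:vq:cL} into \eqref{es:ave:cb}. The factor $(1+T\|\vv\|_{\H^m})$ from \eqref{es:ave:cb} multiplies the factor $(1+T\|(\rho,\vv)\|_{\H^m})$ from \eqref{ave:vq:cL}, producing the square $(1+T\|(\rho,\vv)\|_{\H^m})^2$ in the stated bound, and the $\ep$ pulls out. The main subtlety, and the only place where care is required, is the bookkeeping in the second step: one must verify that after expanding $\nc(\rho\vv)=\rho\,\nc\vv+\vv\ccdot\nabla\rho$ and differentiating via Leibniz, every resulting summand in $\H^{m-2}$ indeed contains at least one factor from $\cL(\rho,\vv)=(\nc\vv,\nabla\rho)$, so that no pure "slow-slow" quantity of size $O(1)$ sneaks into the bound. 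This decoupling is exactly what makes the extra $\ep$ gain survive and is the heart of the lemma.
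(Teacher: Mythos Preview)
Your proposal is correct and follows essentially the same approach as the paper's own argument, which indeed consists of the derivations \eqref{es:ave:cb}, \eqref{es:ave:vq}, and \eqref{ave:vq:cL} given just before the lemma and then combined exactly as you describe. The only quibble is the parenthetical ``with $k=m-2\ge1$'' when invoking \eqref{gen:Sob}: the conditions \eqref{cond:k} require $k\ge2$, so the correct invocation uses $k=m$ (or $k=m-1$) on the factors, not $k=m-2$; this is a minor bookkeeping slip that does not affect the argument.
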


Meanwhile,   estimate \eqref{E0:E1} relates $\cL(\rho, \vv)\sim O(\ep)$ to $\pt(\rho,\vv)\sim O(1)$.  Thus, we will show in Section \ref{sec:a:priori}, by preparing initial data so that    $\bigl\|\pt(\rho,\vv)\bigr\|_{\H^{m-1}}\sim O(1)$  at $t=0$, it will remain $O(1)$  for finite times,   making   $\cL(\rho, \vv)\sim O(\ep)$ hence making the RHS in the above lemma   $O(\ep^2)$.  
 
Having bounded the time-average of the  ``extra term''  $ \rB{\vp,\vq }$ in the compressible system \eqref{UP:B}, we move on to  show how it helps us to  estimate  $(\vp-\vl)$.  Before the main result in Theorem \ref{thm:ave:es}, we prove
  following technical lemma showing that  error propagation in  a bilinear  time-dependent system   heavily relies   on the time-averages of its coefficients.

\begin{lemma}\label{thm:linear:es}
Let $\rb(\cdot,\cdot)$ be a bilinear operator (made precise below).
Consider time-dependent    systems 
\be\label{linear:bilinear}\pt\su_i+\rb{(\sv_i,\,\su_i)}=\si_i,\qquad\text{  }i=1,2,\qquad\text{subject to the same initial data}\ee
with $\sv_i\in\bC([0,T],\scrB)$, $\su_i\in\bC([0,T],\scrB')$ and $\ave{\si_1-\si_2}\in\bC([0,T],\scrB'')$ for some Banach spaces $\scrB,\scrB',\scrB''$.

Let $\scrH\supset\scrB',\scrB''$ be a (real) Hilbert space so that  $\rb(\cdot,\cdot)$ is bounded as     $\scrB\times\scrB'\mapsto\scrH$, and also bounded as   $\scrB\times\scrB''\mapsto\scrH$. Furthermore, assume 
\be\label{es:b:3}\langle u,\rb(v,u) \rangle_\scrH\le C\|v\|_\scrB\|u\|_\scrH^2,\quad\text{ for }v\in\scrB,\;u\in\scrB'\bigcap\scrB'',\ee
with some constant $C$.
Then,  with $M:=\eesup\bigl\{\| \sv_1\|_{\scrB}  \bigr\}$, we have\[ \eesup \|\su_1-\su_2 \|_{ \scrH } \le \eesup\|\ave{\si_1-\si_2}\|_{\scrH}+{e^{CMT}-1\over CM}\eesup\Bigl\{ \big\|\rb(\sv_1,\ave{\si_1-\si_2})+\rb(\sv_1-\sv_2,\,\su_2)\big\|_{\scrH} \Bigr\} \]
\end{lemma}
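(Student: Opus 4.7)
The plan is to write the equation for the difference $w := \su_1 - \su_2$, absorb the non-averaged forcing into a change of variables, and then run a Gronwall argument in $\scrH$ using only the structural bound \eqref{es:b:3}.

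Subtracting the two equations in \eqref{linear:bilinear} and using bilinearity,
\[
\pt w + \rb(\sv_1, w) = -\rb(\sv_1-\sv_2, \su_2) + (\si_1-\si_2),
\]
with $w(0)=0$. The right-hand side still contains $\si_1-\si_2$ pointwise in time, which cannot yet be controlled. To trade it for its time-average, introduce $G(t,\cdot):=\ave{\si_1-\si_2}(t,\cdot)$, so that $\pt G = \si_1-\si_2$, and set $z:=w-G$. Then $z(0)=0$ and
\[
\pt z + \rb(\sv_1, z) = -\rb(\sv_1-\sv_2,\, \su_2) - \rb(\sv_1, G).
\]
This replaces the raw forcing $\si_1-\si_2$ by $\rb(\sv_1,G)$ in which the time-averaged quantity appears; that is exactly the factor we want on the right-hand side of the conclusion.

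Next, take the $\scrH$-inner product of the $z$-equation with $z$. The hypothesis \eqref{es:b:3} applied with $v=\sv_1,u=z$ gives $\langle z,\rb(\sv_1,z)\rangle_\scrH\le C\|\sv_1\|_\scrB\|z\|_\scrH^2$, so after rearranging and using Cauchy--Schwarz on the forcing,
\[
\tfrac12\tfrac{d}{dt}\|z\|_\scrH^2 \le C\|\sv_1\|_\scrB\|z\|_\scrH^2 + \|z\|_\scrH\bigl(\|\rb(\sv_1-\sv_2,\su_2)\|_\scrH + \|\rb(\sv_1,G)\|_\scrH\bigr).
\]
Dividing by $\|z\|_\scrH$ (justified by the standard approximation $\sqrt{\|z\|^2+\delta}$ and sending $\delta\to0$) yields a linear differential inequality for $\|z\|_\scrH$ with coefficient $C\|\sv_1\|_\scrB\le CM$ and source bounded by $S(t):=\|\rb(\sv_1-\sv_2,\su_2)\|_\scrH+\|\rb(\sv_1,G)\|_\scrH$. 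Integrating via Gronwall from $t=0$ (where $z=0$) to $t=T$ gives
\[
\|z(T)\|_\scrH \le \int_0^T e^{CM(T-s)}S(s)\,ds \le \eesup\{S\}\cdot \frac{e^{CMT}-1}{CM}.
\]

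Finally, undo the substitution via the triangle inequality $\|\su_1-\su_2\|_\scrH\le\|z\|_\scrH+\|G\|_\scrH=\|z\|_\scrH+\|\ave{\si_1-\si_2}\|_\scrH$ and take the supremum in $T$ to obtain the claimed bound. The only delicate point is that the hypothesis \eqref{es:b:3} is one-sided, but this is precisely the direction needed after moving $\langle z,\rb(\sv_1,z)\rangle_\scrH$ from the left to the right of the energy identity; no lower bound is required. The boundedness hypotheses on $\rb:\scrB\times\scrB'\to\scrH$ and $\rb:\scrB\times\scrB''\to\scrH$ ensure that $\rb(\sv_1-\sv_2,\su_2)$ and $\rb(\sv_1,G)$ indeed lie in $\scrH$, so the energy estimate makes sense throughout.
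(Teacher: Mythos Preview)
Your argument is essentially identical to the paper's: subtract the two equations, shift by the time integral $G=\ave{\si_1-\si_2}$ to replace the pointwise forcing by $\rb(\sv_1,G)$, take the $\scrH$ inner product with the shifted variable, invoke \eqref{es:b:3}, and close by Gronwall and the triangle inequality. The paper calls your $z$ by $(\su_1-\su_2-\xi)$ and your $G$ by $\xi$, but the steps match line for line.

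One remark: your closing claim that the one-sided inequality \eqref{es:b:3} is ``precisely the direction needed'' is actually backwards. After moving $\langle z,\rb(\sv_1,z)\rangle_\scrH$ to the right you need an \emph{upper} bound on $-\langle z,\rb(\sv_1,z)\rangle_\scrH$, i.e.\ a lower bound on $\langle z,\rb(\sv_1,z)\rangle_\scrH$, which is the opposite of what \eqref{es:b:3} literally states. The paper's own proof uses \eqref{es:b:3} in the same way, and in the actual applications (Theorem~\ref{thm:ave:es} and the transport estimate) the trilinear term is bounded in absolute value, so the issue is with the phrasing of the abstract hypothesis rather than with your argument. Just do not advertise the one-sidedness as a feature.
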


The significance of this result is that $|\su_1-\su_2|$ measured   {pointwise in time}  is affected by   $\si_i$ only via   time-average    $\ave{(\si_1-\si_2)}$, not the  pointwise-in-time values of  $|\si_1-\si_2|$. 

Similar effect comes from $\ave{(\sv_1-\sv_2)}$ as well,
but we will neither prove nor use it in this article. 
\begin{proof}  (Lemma \ref{thm:linear:es}).    Set $i=1,2$ in \eqref{linear:bilinear} and subtract them to get
\[\pt(\su_1-\su_2)+\rb{(\sv_1 ,\,\su_1-\su_2)}+\rb{(\sv_1-\sv_2,\,\su_2)}= \si_1-\si_2 \]
Then, define the time integral of the right hand side
\[\xi  :=\ave{(\si_1-\si_2)}, \]
and  replace the RHS of the previous equation with $\pt\xi$,  recasting it into
\[\pt(\su_1-\su_2-\xi)+\rb(\sv_1, \su_1-\su_2-\xi)=-\rb(\sv_1,\xi)-\rb(\sv_1-\sv_2,\,\su_2).\]
The boundedness hypotheses on $\rb(\cdot,\cdot)$ guarantee every bilinear term above is in  $\bC([0,T],\scrH)$ and therefore so is $\pt(\su_1-\su_2-\xi)$. This allows us to 
take the $\scrH $ inner product of this equation with $(\su_1-\su_2-\xi)$. Then, apply  \eqref{es:b:3} to get
\[ {d\over dt}\|\su_1-\su_2-\xi\|_{  \scrH } \le C\|\sv_1\|_{ \scrB}\|\su_1-\su_2-\xi\|_{ \scrH}+\big\|\rb(\sv_1,\ave{\si_1-\si_2})+\rb(\sv_1-\sv_2,\,\su_2)\big\|_{\scrH} .\]
 
Finally, for fixed $T$, relax $ \| \sv_1\|_{\scrB} $    to $M$
 and also relax the last   term to its maximum over $[0,T]$ to arrive at a   differential inequality with constant coefficients. This is easily solved to confirm the desired conclusion.
\end{proof}

 We also state a convenient fact   due to   $\H^2\subset\sL^\infty$ on two or three dimensional domain,  
 \be\label{b:H:3}\text{for }k\ge 0,\quad \|fg\|_{\H^{k }}\lsm\|f\|_{\H^{k+2}}\|g\|_{\H^{k }}.\ee
 
\begin{theorem}\label{thm:ave:es} {\bf(Time-averaging estimates) } Consider the  incompressible Euler equations \eqref{Euler:in:cp} and the slow dynamics \eqref{UP:B},  i.e.
\begin{align}  \label{Euler:in:cp:lemma}  \pt \vl+ \cP\bigl(\vl\cn\vl\bigr)&=0,\\ 
   \label{UP:B:lemma}  \pt \vp+ \cP\bigl(\vp\cn\vp\bigr)&=- \cP\rB{ \vp, \vq},\end{align}
both of which are subject to  the solid-wall boundary condition and   the same initial data $ \vl_0=\vp_0$. %\[\vp\cdot\nm\bc=\vl\cdot\nm\bc=0.\]
 Suppose    $(\vl,\rho,\vv =\vp+\vq)\in \bC([0,T^\flat ],\H^m(\Omega))$ for $m\ge3$.  
Then, there exists   constants $D_{1},D_2$ only dependent on $m$, $\Omega$ so that,\[\eesup\|\vl-\vp\|_{ \H^{m-3} } \le \ep\,D_{1}\,\eesup\bigl\{\bigl\|\cL(\rho, \vv)\bigr\|_{\H^{m-1}} \|\vp\|_{\H^m}\bigr\}\]for $T\in[0,T^\flat ]\bigcap\Bigl[0,\,  \displaystyle{D_2/\esssup_{[0,T^\flat]} \| (\rho,\vv,\vl)    \|_{\H^{m}}}\Bigr]$.
\end{theorem}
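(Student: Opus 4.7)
The plan is to apply Lemma~\ref{thm:linear:es} to the pair \eqref{Euler:in:cp:lemma}, \eqref{UP:B:lemma} with the natural bilinear form $\rb(v,u):=\cP(v\cn u)$ and identifications $u_1=v_1=\vl$, $\gamma_1=0$, $u_2=v_2=\vp$, $\gamma_2=-\cP\rB{\vp,\vq}$, so that both PDEs fit the template $\pt u_i+\rb(v_i,u_i)=\gamma_i$ with common initial data. The Banach spaces will be $\scrH=\H^{m-3}$, $\scrB=\scrB'=\H^m$, and $\scrB''=\H^{m-2}$, the last chosen to match the space in which Lemma~\ref{thm:ave:cb} bounds $\ave{\gamma_1-\gamma_2}=\ave{\cP\rB{\vp,\vq}}$.

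The boundedness of $\rb$ on $\scrB\times\scrB'\to\scrH$ and $\scrB\times\scrB''\to\scrH$ follows directly from the Sobolev product estimates \eqref{gen:Sob}, \eqref{b:H:3} combined with the $\H^k$-boundedness of $\cP$ from \eqref{es:cp:cq}. The substantive hypothesis to verify is the coercivity \eqref{es:b:3}, i.e.\ $\langle u,\cP(v\cn u)\rangle_{\H^{m-3}}\lsm\|v\|_{\H^m}\|u\|_{\H^{m-3}}^2$: expanding via Leibniz, the top-order symmetric piece $\tfrac12\int v\cn|\pa^\alpha u|^2$ integrates by parts and vanishes owing to $\nc v=0$ and $v\ccdot\nm\bc=0$, both of which hold for $v=\vl$ and $v=\vp$, while the commutator pieces are controlled by $C\|v\|_{\H^m}\|u\|_{\H^{m-3}}^2$ through standard Moser-type inequalities.

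With these hypotheses in place, Lemma~\ref{thm:linear:es} yields $\sup\|\vl-\vp\|_{\H^{m-3}}\lsm\sup\|\ave{\cP\rB{\vp,\vq}}\|_{\H^{m-3}}+\tfrac{e^{CMT}-1}{CM}\sup\|\rb(\vl,\ave{\cP\rB{\vp,\vq}})+\rb(\vl-\vp,\vp)\|_{\H^{m-3}}$. Lemma~\ref{thm:ave:cb} controls $\|\ave{\cP\rB{\vp,\vq}}\|_{\H^{m-2}}$ by $\ep\sup\{\|\cL(\rho,\vv)\|_{\H^{m-1}}\|\vp\|_{\H^m}(1+T\|(\rho,\vv)\|_{\H^m})^2\}$; restricting $T\le D_2/\sup_{[0,T^\flat]}\|(\rho,\vv,\vl)\|_{\H^m}$ with a small universal $D_2$ then turns this prefactor and the Gronwall-type factor $(e^{CMT}-1)/(CM)$ into universal constants, and also supplies the required bound on the $\rb(\vl,\,\cdot\,)$ piece.

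The main obstacle is the self-referential term $\rb(\vl-\vp,\vp)=\cP((\vl-\vp)\cn\vp)$ in the conclusion of Lemma~\ref{thm:linear:es}, which depends on the unknown $\vl-\vp$. Using $\|\cP((\vl-\vp)\cn\vp)\|_{\H^{m-3}}\lsm\|\vl-\vp\|_{\H^{m-3}}\|\vp\|_{\H^m}$ from \eqref{b:H:3}, its contribution to the right-hand side carries a coefficient $\tfrac{e^{CMT}-1}{CM}\cdot\|\vp\|_{\H^m}=O(T\|\vp\|_{\H^m})=O(D_2)$; shrinking $D_2$ further forces this coefficient below $1/2$ so that the term can be absorbed into the left-hand side. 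The resulting closed bound, combined with Lemma~\ref{thm:ave:cb}, yields the theorem's estimate. (An equivalent direct route bypasses the circularity entirely: set $w:=\vl-\vp$, $\xi:=\ave{\cP\rB{\vp,\vq}}$, subtract the two equations to obtain $\pt(w-\xi)+\cP(\vl\cn(w-\xi))+\cP((w-\xi)\cn\vp)=-\cP(\vl\cn\xi)-\cP(\xi\cn\vp)$, and run an $\H^{m-3}$ energy estimate on $w-\xi$, which vanishes at $t=0$, via the same commutator argument and Gronwall.)
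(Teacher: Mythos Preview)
Your overall strategy---apply Lemma~\ref{thm:linear:es} with $\rb(v,u)=\cP(v\cn u)$, absorb the self-referential term $\rb(\vl-\vp,\vp)$ by shrinking $D_2$, and finish with Lemma~\ref{thm:ave:cb}---matches the paper's. The gap is in your verification of the coercivity hypothesis \eqref{es:b:3} for the \emph{standard} $\H^{m-3}$ inner product on a bounded domain. You claim that after Leibniz expansion the top-order piece $\tfrac12\int v\cn|\pa^\alpha u|^2$ vanishes, but this ignores the Leray projection inside $\rb$: on a domain with boundary, $\cP$ does not commute with spatial derivatives, so $\pa^\alpha\cP(v\cn u)\ne\cP\bigl(\pa^\alpha(v\cn u)\bigr)$. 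Equivalently, writing $\cP(v\cn u)=v\cn u-\nabla\phi$ and integrating $\langle\pa^\alpha u,\nabla\pa^\alpha\phi\rangle_\LL$ by parts leaves a boundary term $\int_{\pa\Omega}(\pa^\alpha u\ccdot\nm)\,\pa^\alpha\phi$; this does not vanish (spatial derivatives of $u\in\sX$ need not satisfy $\pa^\alpha u\ccdot\nm\bc=0$) and, by trace, cannot be controlled by $\|v\|_{\H^m}\|u\|_{\H^{m-3}}^2$. Your alternative ``direct route'' has the same defect, since the $\H^{m-3}$ energy estimate on $w-\xi$ with the term $\cP(\vl\cn(w-\xi))$ requires precisely \eqref{es:b:3}.

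The paper circumvents this by endowing $\scrH=\H^{m-3}\cap\sX$ with the equivalent inner product $\langle\cdot,\cdot\rangle_\LL+\langle\nt\cdot,\nt\cdot\rangle_{\H^{m-4}}$ (equivalence via \eqref{es:cp:L2}). The key identity is $\nt\cP=\nt$ from \eqref{cKcq}: in the higher-order component one is left with $\langle\nt u,\nt(v\cn u)\rangle_{\H^{m-4}}$, with \emph{no} projection present, so the Leibniz/commutator argument you describe now goes through, and the top-order symmetric piece $\langle\pa^{m-4}\nt u,\,v\cn\pa^{m-4}\nt u\rangle_\LL$ vanishes by the divergence theorem and $v\in\sX$. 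The $\LL$ component is handled separately by dropping $\cP$ against $u\in\sX$ exactly once, as in \eqref{v:cP:v:v}.
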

 \begin{proof}Recall we defined $\sX$   in \eqref{sX:testing} as the space of incompressible velocity fields subject to solid-wall boundary condition.
To fit the notations of Lemma \ref{thm:linear:es},   set $$\begin{aligned}u_1=v_1=\vl,\quad&\si_1=0,\\u_2=v_2=\vp,\quad&\si_2=- \cP\rB{ \vp, \vq},\\
 \scrB=\scrB'=\H^m\bigcap\sX,\quad&\scrB''=\H^{m-2}\bigcap\sX\end{aligned}$$ and in particular set $$\scrH=\H^{m-3}\bigcap\sX.$$   We then endow $\scrH$ with the following inner product
\be\label{def:scrH}\langle\vv,\vv'\rangle_\scrH:=\begin{cases}\langle\vv,\vv'\rangle_\LL,&\text{ if }m=3,\\
\langle\vv,\vv'\rangle_\LL+\langle\nt\vv,\nt\vv'\rangle_{\H^{m-4}},&\text{ if }m\ge4.\end{cases}\ee
The induced $\scrH$ norm, by the virtue of \eqref{es:cp:L2}, is then equivalent to the $\H^{m-3}$ norm
\be\label{H:H:equiv}\|\vv\|_\scrH\lsm\|\vv\|_{\H^{m-3}}\lsm\|\vv\|_\scrH.\ee
By \eqref{es:cp:cq}, we see $\cP$ is a bounded operator over $\scrH$ as well as bounded over $\H^{m-3}$.

Set the bilinear operator  
\be\label{def:b:2:3}\rb(\vv,\vv')=\cP(\vv\cn\vv'),\ee
so that, by    \eqref{H:H:equiv}
  and  \eqref{b:H:3} ($k=m-3$), all the boundedness assumptions on $\rb(\cdot,\cdot)$ in Lemma \ref{thm:linear:es} are satisfied. This  leaves us only \eqref{es:b:3} to validate, i.e we need to show, with $m\ge3$,\be\label{trilinear:thm}\langle\vv',\cP(\vv\cn\vv')\rangle_\scrH\lsm \|\vv\|_{\H^m}\|\vv'\|_{\H^{m-3}}^2\quad\text{ for }\vv\in\H^m\bigcap\sX,\;\;\vv'\in\H^{m-2}\bigcap\sX.\ee
Here, we used the $\H^{m-3}$ norm instead of $\scrH$ norm in the RHS, thanks to \eqref{H:H:equiv}. Indeed, by definition \eqref{def:scrH}, we estimate   the $\LL$ component and the higher derivative component  respectively in the above inequality.
The   $\LL$ component is simply zero as we can use the $\LL$-orthogonality of $\cP,\cQ$ and $\vv'\in\sX=\textnormal{image}\,\cP$ to get,
\be\label{v:cP:v:v}  \langle\vv', \cP(\vv\cn\vv')\rangle_\LL=\langle\vv',   \vv\cn\vv' \rangle_\LL=\intO{1\over2}\vv\cn|\vv'|^2 =0.\ee
Here, the last equality is due to the Divergence Theorem and $\vv\in \sX\bigcap \H^m$. Also note from \eqref{trilinear:thm} that $\vv'\in \H^{m-2}$, causing no regularity problem. 
 
 For the higher derivatives of \eqref{trilinear:thm}, it is needed only for $m\ge4$. Upon taking curl of \eqref{def:b:2:3} and using $\nt(\cP\vv)=\nt\vv$ given in \eqref{cKcq}, one has
\[\nt\cP(\vv\cn\vv')=\vv\cn(\nt\vv')+\rb_p(\nabla\vv,\nabla\vv')\]
for some bilinear \emph{polynomial} $\rb_p(\cdot,\cdot)$. By \eqref{b:H:3} ($k=m-4$),
\[\|\rb_p(\nabla\vv,\nabla\vv')\|_{\H^{m-4}}\lsm\| \vv\|_{\H^{m}}\| \vv'\|_{\H^{m-3}}.\]
 Also by \eqref{b:H:3} ($k=0$),
 \[\sum_{j=0}^{m-5}\left\|\nabla^{j}\big[\vv\cn(\nt\vv')\big] \right\|_\LL+\left\|\nabla^{m-4}\big[\vv\cn(\nt\vv')\big]-\vv\cn(\nabla^{m-4}\nt\vv')\right\|_\LL\lsm\| \vv\|_{\H^{m}}\| \vv'\|_{\H^{m-3}}.\]
Combining the above 3 lines of calculations, we obtain
\[\left\langle\nt\vv',\nt\cP(\vv\cn\vv')\right\rangle_{\H^{m-4}}\lsm \| \vv\|_{\H^{m}}\| \vv'\|^2_{\H^{m-3}}+\left\langle\nabla^{m-4}\nt\vv',\vv\cn(\nabla^{m-4}\nt\vv')\right\rangle_{\LL}.\]
But the last term is simply zero by the Divergence Theorem and $\vv\in \sX\bigcap \H^m$. Therefore, \eqref{trilinear:thm} hence \eqref{es:b:3} is confirmed.
 
Having validated all assumptions of Lemma  \ref{thm:linear:es}, we arrive at, with $M:=\eesup\bigl\{\| \vl\|_{\H^{m}}  \bigr\}$, \be\label{thm:lemma}\begin{aligned} \eesup \|\vl-\vp \|_{\scrH } &\le \eesup\left\|\ave{\rB{ \vp, \vq}}\right\|_{\scrH}\\&+{e^{CMT}-1\over CM}\eesup\Bigl\{ \left\|\rb(\vl,\ave{\rB{ \vp, \vq}})+\rb(\vl-\vp,\vp)\right\|_{\scrH} \Bigr\}.\end{aligned} \ee
To estimate the second last bilinear term, apply \eqref{b:H:3} ($k=m-3$)
\[\begin{aligned}&\left\|\rb(\vl,\ave{\rB{ \vp, \vq}})\right\|_{\scrH} \lsm  \left\| \vl\cn\ave{\rB{ \vp, \vq}} \right\|_{\H^{m-3}}\lsm \left\|\vl\right\|_{\H^{m }}\|\ave{ \rB{\vp,\vq }}\|_{  \H^{m-2} }.\end{aligned}\]
Comparing the last $\H^{m-2}$ norm with Lemma \ref{thm:ave:cb}, we see why we had to choose $\H^{m-3}$ norm in this current theorem.

To estimate the   last bilinear term of \eqref{thm:lemma}, apply \eqref{b:H:3} (with $k=m-3$ but $f=\nabla\vp$, $g=\vl-\vp$)
\[\left\|\rb(\vl-\vp,\vp)\right\|_{\scrH}\lsm \left\|(\vl-\vp)\cn\vp \right\|_{\H^{m-3}}\lsm \left\| \vp\right\|_{\H^{m }}\left\|\vl-\vp\right\|_{\H^{m-3}}.\]
Note it is crucial to only use $\H^{m-3}$ norm of $(\vl-\vp)$ because it is needed to bring \emph{closure} to the $\H^{m-3}$ estimate started from LHS of \eqref{thm:lemma}.

Substitute the above two estimates into \eqref{thm:lemma}, use the equivalence \eqref{H:H:equiv} and rearrange to obtain, with $M=\eesup\bigl\{\| \vl\|_{\H^{m}}  \bigr\}$,  $M^p:=\eesup\bigl\{\| \vp\|_{\H^{m}}  \bigr\}$,
\be\label{es:almost}\left(1-c_3{e^{CMT}-1\over CM}M^p\right)\eesup\|\vl-\vp\|_{\H^{m-3}}\le \left(c_1+c_2(e^{CMT}-1)\right)\eesup \|\ave{ \rB{\vp,\vq }}\|_{  \H^{m-2} } \ee
for some universal positive constants $c_1,c_2,c_3,C$. If   \be\label{choose:D2} MT, \,M^pT\le D_2,\ee for a constant $D_2$, then by the Mean Value Theorem,
\[{e^{CMT}-1\over CM}M^p={e^{CMT}-1\over CMT}TM^p\le e^{CD_2}D_2 .\]
Thus, we can choose $D_2$ so that the coefficient in the LHS of \eqref{es:almost} is no less than 1/2, yielding
\[\eesup\|\vl-\vp\|_{\H^{m-3}}\le 2\,\left(c_1+c_2(e^{CD_2}-1)\right)\eesup \|\ave{ \rB{\vp,\vq }}\|_{  \H^{m-2} }. \]

Combining it with    Lemma \ref{thm:ave:cb} and \eqref{choose:D2},  we conclude the proof.
  \end{proof}

 \section{\bf  Estimates  Independent of $\ep$}\label{sec:a:priori}
What does it mean to have estimates independent of $\ep$? Basically, we want calculations to be invariant under the hyperbolic scaling $\rho\to c\rho,\vv\to c\vv,t\to t/c,\ep\to \ep/c$ for any constant $c>0$. In Theorem \ref{thm:uni:es} for example, we will show $\H^m$ solutions exist at least for  time interval   of order $1/\|(\rho_0,\vv_0)\|_{\H^m}$. During this time interval,   the solution's $ {\H^m }$ norm is  at most inflated  by a   constant   and very importantly, the $\H^{m-1}$ norm of its first time derivative   is also only inflated by a constant. The latter estimate can be loosely stated as    ``what starts well-prepared, stays well-prepared''. Recall that having $\pt(\rho,\vv)\sim O(1)$ is equivalent to having $(\nc\vv,\nabla\rho)\sim O(\ep)$ as suggested  in \eqref{E0:E1}

The main difficulty is, even though  $\cL$ is skew-self-adjoint, namely $\displaystyle\intO \bpm\rho\\\vv\epm\cdot\cL\bpm\rho\\ \vv\epm\,dx=0$ for   $\vv\ccdot\nm\bc=0$, it is in general not the case for the spatial derivatives, namely $$\displaystyle\intO \pa_{x}^\beta \bpm\rho\\\vv\epm\cdot\pa^\beta_{x}\cL\bpm\rho\\ \vv\epm\,dx\ne0\quad\text{   if }\;\; \beta\ne0\text{ and }\pa\Omega\ne\emptyset.$$ This would've introduced $O(\ep^{-1})$ terms in the energy estimate. In addition, a very sophisticated mollification procedure would've been needed in estimating the highest spatial derivatives \cite{Rauch:char}.

On the other hand, we can recruit higher time derivatives since $\pa\Omega$ is static and,
\be\label{pt:cL}\intO \pa_{t}^k \bpm\rho\\\vv\epm\cdot\pa^k_{t }\cL\bpm\rho\\ \vv\epm\,dx=0\qquad\text{if \;\;}\pa^k_t \bpm\rho\\\vv\epm\in \H^1(\Omega)\text{ and } \vv\ccdot\nm\bc=0. \ee
 If one considers $[0,T]\times\pa\Omega$ as the lateral  boundary of the time-space domain, then the $\pt$ derivatives are precisely taken in the tangential directions, resonating with   the argument in  \cite{Rauch:char} that, near a characteristic boundary,      tangential and normal derivatives   are estimated differently.

%\subsection{Rescaling} 
 
 We first rescale the original system  into an equivalent one without explicit dependence on $\ep$. At the end of this section,   we   scale it back to the original formulation for which the essential results  will still be independent of $\ep$, so long as hyperbolic scaling is respected in the estimates.

 Recall the pressure law  $ \pres(\rho^\tot)=\pres(1+\ep\rho)$  rescaled to satisfy   $\pres(1)= \pres'(1)=1$.  Introduce new variable $r$ satisfying  \be\label{def:r} \pres(1+\ep\rho)= 1+\ep r  \iff  \pres^{inv}(1+\ep r)= 1+\ep \rho =\rho^\tot\ee
 where ${\pres}^{inv}$ denotes the functional inverse of $\pres$. Note by Taylor expansion $r=\rho+O(\ep\rho^2)$.  Then, we reformulate  the Euler equations \eqref{Euler:rhtot}   term-by-term,
\[\left\{\begin{split} {\ep\over \pres'({\pres}^{inv}(1+\ep r))}( \pt r+\vv\cn r )+ {\pres}^{inv}(1+\ep r) {\nc\vv }&=0,\\
 \pt\vv+\vv\cn\vv +{1\over {\pres}^{inv}(1+\ep r)}{\nabla r\over\ep}&=0.\end{split}\right.\]
Introduce\be\label{hyp:scale}V:=(  \brr,  \brv):=(\ep r,\ep \vv),\qquad \tau:=t/ \ep,\ee
 and rewrite the previous system in terms of $V$   as a symmetric hyperbolic PDE system, \be\label{sys:V} \ptau V+\brv\cn V=-  \DM({\brr}){\cL}(V),\qquad   \brv\ccdot\nm\bc=0\ee
where diagonal matrix\[ \DM(\brr):=\text{diag}\bigl\{{\pres}^{inv}(1+ \brr )\pres'({\pres}^{inv}(1+ \brr )), {1\over{\pres}^{inv}( 1+\brr )} ,\ldots,{1\over{\pres}^{inv}(1+ \brr )} \bigr\}.\] 
This rescaled, $\ep$-free system will be the main subject of this section.
 
Since   $\DM(0)=I$ and $\DM(\brr)$ is $\bC^\infty$,     there exists a constant $\brR$ only depending on $\pres(\ccdot)$ such that
\be\label{es:DM}\text{for }|\brr|\le \brR,\quad\text{ diagonal matrice  }\DM(\brr)\in[1/2,2] \ee
For example, if $\pres(\ccdot)$ satisfies the $\gamma$-power law, then by the scaling assumption \eqref{pres:scale}, it must be $\pres(\rho^\tot)= (\gamma-1+(\rho^\tot)^\gamma )/\gamma$ so that ${\pres}^{inv}(1+\brr)=(1+\gamma \brr )^{1/\gamma}$. Because physics suggests $\gamma\in[1,2]$, it is easy to calculate $\brR= (1-2^{-\gamma}) /\gamma$.

Introduce a mixed norm for $V(\tau,x)$ at any fixed time $\tau$
\[\nmm{V}_m(\tau):=\Bigl(\sum_{k=0}^m\|\ptau^k V\|^2_{\H^{m-k}(\Omega)}(\tau)\Bigr)^{1/2}.\] 
%where   multi-index $\beta$ indicates   orders of derivatives taken on the time dimension and each spatial dimension.
It  encapsulates all mixed space-time derivatives up to order $m$.

For brevity,  we  make a priori assumption throughout this section that 
\be\label{apriori}\text{integer } m \ge3, \;\; V\in\bC([0,\tau^*],\H^m(\Omega)),\;\;  \nmm{V}_m(\tau)\le1 \text{\;\; and \;\;}  |\brr(\tau,x)|\le\brR\ee
for all  $\tau\in[0,\tau^*]$,
 unless specified otherwise. The last two inequalities are not stringent at all because   by the scaling of $V$   in \eqref{hyp:scale},  \eqref{sys:V}, one expects  $\nmm{V}_m$ and $|\brr|$ to be $O(\ep)$ as long as the original unknown satisfies scaling $\|(r,\vv)\|_{\H^m}\sim O(1)$.   
 
 \subsection{Mixed norms}\label{subs:norms}
%From here on, \be\label{m:}\text{alway assume fixed integer }m\ge3\ee

 First, we establish some basic facts of $\nmm{\cdot}$ as  direct consequences of Sobolev inequalities. Given  functions $f_1(\tau,x), f_2(\tau,x),\ldots,f_j(\tau,x)$ and a product of mixed mixed derivatives $ (\pa^{\beta_1}_{\tau,x}f_1)\,(\pa^{\beta_2}_{\tau,x}f_2)\ldots(\pa^{\beta_j}_{\tau,x}f_j)$, 
we have, \be\label{prod:ineq} \Bigl\|\prod_{i=1}^j\pa^{\beta_i}_{\tau,x}f_i \Bigr\|_{\LL}\lsm \prod_{i=1}^j\nmm{f_i}_k, \ee
if   one of the following conditions hold
 \[\left\{\begin{aligned}&|\beta_1+...+\beta_j|\le k,\;\;\text{ and }\;\; k\ge2,\;\;\text{ or }\\
 &|\beta_1+...+\beta_j|= k+1,\;\; \max\{|\beta_1|,\ldots,|\beta_j|\}\le k,\;\;\text{ and }\;\; k\ge3.\end{aligned}\right.\]
 The proof is exactly the same as the one given in the Appendix where only spatial derivatives are involved.

Estimates on $\DM(\brr)$ and its matrix inverse $\Din(\brr)$ will also be needed. By \eqref{es:DM},   $\brr$-derivatives of $\DM$ and $\Din$ can be bounded by constants only depending on  pressure law $\pres(\ccdot)$ and the order of derivatives, i.e. \be\label{es:DM:n}  \left|{d^k \over d\brr^k}\DM(\brr)\right|+\left|{d^k \over d\brr^k}\Din(\brr)\right|\lsm1\text{\quad for }\;\;|\brr|\le\brR.\ee
Also, by the Mean Value Theorem and  $\DM(0)=I$,
\be\label{es:DM:I}\left| \DM(\brr)-I\right|+\left|\Din(\brr)-I\right|\lsm |\brr|\text{\quad for }\;\;|\brr|\le\brR.\ee

Now,  inductively apply the chain rule and product rule to obtain, for multi-index $\beta$,
\[\begin{split}\pa^\beta_{\tau,x}\DM(\brr)&=\text{linear combination of }\,  (\pa^{\beta_1}_{\tau,x}\brr)\,(\pa^{\beta_2}_{t,x}\brr)\ldots(\pa^{\beta_j}_{\tau,x}\brr)\,{d^j \over d\brr^j}\DM(\brr)\\&\text{ over all   integers }j\in[1,  |\beta| ] \text{ and multi-indices satisfying  } \;\beta_1+...+\beta_j=\beta. \end{split}\]  
Therefore, by     \eqref{prod:ineq},  \eqref{es:DM:n} and the assumptions $\nmm{V}_m\le1$, $m\ge3$ set in \eqref{apriori},  \[\|\pa^\beta_{\tau,x}\DM(\brr)\|_{\LL}\lsm\nmm{\brr}_m, \text{ \qquad for  } 1\le|\beta|\le m.\]
Obviously, this estimate works for matrix inverse $\DM^{-1}$ as well. Sum  such estimates for all $|\beta|$ from 1 to $m$, and use \eqref{es:DM:I} for the $|\beta|=0$ case to arrive at 
\be \label{es:DM:mm}\nmm{\DM(\brr)-I}_m+ \nmm{\DM^{-1}(\brr)-I}_m \lsm\nmm{\brr}_m   .\ee
Similar estimate works for   $\ptau\DM$. In fact, applying \eqref{prod:ineq} to ${\ptau\DM =({d\over d\brr}\DM(\brr)-{d\over d\brr}\DM(0))\ptau\brr+{d\over d\brr}\DM(0)\ptau\brr}$, 
and  noting that  ${{d\over d\brr}\DM(\brr)-{d\over d\brr}\DM(0)}$ can be bounded in a way similar to  \eqref{es:DM:mm} , namely, \\
${\nmm{{d\over d\brr}\DM(\brr)-{d\over d\brr}\DM(0)}_{m-1}\lsm \nmm{\brr}_{m-1}\le1}$, we have\be \label{es:ptau:DM:mm}\nmm{\ptau\DM(\brr) }_{m-1}+ \nmm{\ptau\DM^{-1}(\brr) }_{m-1} \lsm\nmm{\ptau\brr}_{m-1}   .\ee

Now,   thanks to the   $\ep$-free formulation of \eqref{sys:V},   combine \eqref{es:DM:mm}, \eqref{es:ptau:DM:mm} with   Sobolev inequalities \eqref{prod:ineq} to inductively estimate $\ptau V,\ptau^2V,\ldots,\ptau^mV$ and obtain,  under the a priori assumption   \eqref{apriori}\be\label{nmm:equiv}   \nmm{\ptau V}_{m-1}\lsm  \mm{\ptau V}{m-1}\le \nmm{V}_m\lsm \|V\|_{\H^m}\ee
% but we will not take this advantage until Subsection \ref{subset:uniform}.
 
 \subsection{Vorticity estimates} 
Define $\brom:=\nabla\times\brv$. Take  $\nabla\times$ of the momentum equation in \eqref{sys:V},
\be\label{om:bB}\ptau\brom+\bB(\brv,\brom)=0\ee
 \[\text{where}\qquad
\bB(\vv,\om)=\begin{cases}\vv\cn\om+(\nc\vv)\om&\text{in   2D  }\\
\vv\cn\om+(\nc\vv)\om-\om\cn\vv&\text{in   3D }\end{cases}\]
 
 It is then an     exercise of energy estimates with Sobolev inequalities  to show (e.g. \cite[\S 17.3]{Taylor:III}),  with   any $\brv$ satisfying \eqref{apriori} and  $\vv\ccdot\nm\bc=0$,
\be\label{es:brom}\qquad \|\brom\|^2_{\H^{m-1}}\ttdiff   \lsm  \ttint\|\brom\|_{\H^{m-1}}^2 \|\nabla_x\brv\|_{\H^{m-1}}
\,\qquad \text{ for }\;\;0\le\tau_1<\tau_2\le\tau^*.\ee
Note that, due to $\brom\in\H^{m-1}(\Omega)$, before taking   the $(m-1)$th spatial derivatives of \eqref{om:bB}, one should mollify $\brom$ by first extending it outside $\pa\Omega$, then convolving it with a smooth kernel and restricting it back to $\Omega$   --- cf. \cite[pp. 489]{Taylor:III}.  Velocity $\brv$ needs not to be mollified because  it is assumed to be $\H^m(\Omega)$. It also means $\brv\big|_{\pa\Omega}$ is a defined function,   allowing us to utilize the $\brv\ccdot\nm\bc=0$ condition and Divergence Theorem to show such identities as $$\intO (\brv\cn f)f\,dx=-{1\over2}\intO(\nc\brv)|f|^2\,dx\;\quad\text{ for }\quad\; f\in \H^1(\Omega).$$

Next, recall \eqref{elliptic:bc} which  gives a bound for $V$ using $\cL(V)=( \nc\brv, \nabla\brr )$ and $ \nt\brv $,\be\label{elliptic:LK}
 \|V\|_{\H^k}\lsm  \|\cL(V)\|_{\H^{k-1} }+ \|\nt\brv \|_{\H^{k-1}  }+ \|V\|_{\LL },\quad\text{ if $\brv\ccdot\nm\bc=0$  and }  k\ge1.\ee
So it remains to estimate $\cL(V)$. In light of \eqref{pt:cL} and the fact that $\cL(V)$ and $\ptau V$ are connected via \eqref{sys:V}, we now move on to estimate $\|\pt^k V\|_{\LL}$ for $k=0,1,\ldots m$.

\subsection{Diagnostic estimates and recurrence}
We now connect mixed norms of $\cL(V)$ and $\ptau V$  via rescaled system \eqref{sys:V} and its time derivatives at any   fixed time. We call such   estimates ``diagnostic'', as opposed to ``prognostic'' estimates such as \eqref{es:brom} for $\brom$ in the form of time-dependent integral and differential  inequalities. Diagnostic estimates do NOT rely on evolutionary properties of \eqref{sys:V} and will not involve Gronwall type inequalities. They instead come from algebraic manipulation of \eqref{sys:V} at a fixed time $\tau$, typically using the product rule and Sobolev inequalities.

Elliptic estimate  \eqref{elliptic:LK} and the fact that $\ptau^k\brv\ccdot\nm\bc=0$  for $ k \in[0,m-1]$ imply,\[\|\Vk\|_{\H^{m-k }}\lsm\|\cL(\Vk)\|_{\H^{m-k -1}}+ \|\nt\ptau^{k }\brv \|_{\H^{m-k -1}}+\|\Vk\|_{\LL  }.\]
 On the other hand,
take $\ptau^k$ derivative on \eqref{sys:V} to get,   for $k\in[0,m-1]$,
\[  -\cL(\Vk)= \Vkl +\ptau^k(\brv\cn V)+ \ptau^k\big((\DM-I) \cL(V) \big). \]  
Combine these two    to obtain a recursive inequality, for every $ k \in[0,m-1]$,\be\label{recur:k:k1}\|\Vk\|_{\H^{m-k }}\lsm \|\Vkl\|_{\H^{m-k-1}}  +  \mm{\ptau^k\Bigl( \brv\cn V,\,(\DM-I) \cL(V),\,\brom\Bigr)}{m-k-1} +\|\Vk\|_{\LL  }.\ee
At one end of this recursive chain is $\mm{V}{m}$, the desirable norm, and at the other end is $\mm{ \Vm}{0}$, which will be estimated prognostically using  energy method in the next subsection.

Now,   exclude $k=0$ and connect from $k=1$ to $m-1$ in \eqref{recur:k:k1}, using the definition of $\nmm{ \cdot}$ and replace $\ptau\brom$ with the bilinear term from \eqref{om:bB} to get
$$\|\ptau V  \|_{\H^{m-1 }}\lsm     \nmm{\ptau\Bigl( \brv\cn V,\,(\DM-I) \cL(V) \Bigr) }_{m-2} +  \nmm{\bB(\brv,\brom)   }_{m-2}+\sum_{k=1}^m\|\Vk \|_{\LL  }$$
 %Same relation holds true if we remove all $ $ from above.
  %and apply \eqref{prod:ineq},  \eqref{es:DM:mm},\eqref{nmm:equiv} to prove the following lemma. 
Combine it with   Sobolev inequalities \eqref{prod:ineq},  bounds on $(\DM-I)$ in \eqref{es:DM:mm} and the equivalence of $\nmm{\ccdot}_m$ and $\mm{\ccdot}m$ in \eqref{nmm:equiv} to reach the following lemma.

\begin{lemma}\label{thm:diagn:1} {\bf(Diagnostic estimates on first time derivative)} Consider $V(\tau,x)$, a solution of \eqref{sys:V} in the a priori setting of \eqref{apriori}. Then, 
\[\|\ptau V  \|_{\H^{m-1 }}\lsm     \mm{V }{m }^2+\sum_{k=1}^m\|\Vk  \|_{\LL  }\]
% Same relation holds true if we remove all $ ,\displaystyle\suptt$ from above.
\end{lemma}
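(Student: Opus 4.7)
The scaffolding is already in place: the displayed inequality immediately preceding the lemma expresses $\|\ptau V\|_{\H^{m-1}}$ as a sum of $\nmm{\ptau(\brv \cn V)}_{m-2}$, $\nmm{\ptau((\DM-I)\cL(V))}_{m-2}$, $\nmm{\bB(\brv,\brom)}_{m-2}$, and the $\LL$ sum $\sum_{k=1}^m\|\Vk\|_{\LL}$. Since the last sum already appears in the target estimate, my plan is simply to bound each of the three mixed-norm terms by a constant multiple of $\mm{V}{m}^2$, using only the mixed-derivative Sobolev product estimate \eqref{prod:ineq}, the coefficient bounds \eqref{es:DM:mm}--\eqref{es:ptau:DM:mm}, and the equivalence \eqref{nmm:equiv}.

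For $\nmm{\ptau(\brv \cn V)}_{m-2}$, I would expand via the Leibniz rule into a sum of products $(\pa_{\tau,x}^{\beta_1}\brv)(\pa_{\tau,x}^{\beta_2}\nabla V)$ whose total mixed order satisfies $|\beta_1|+|\beta_2|\le m-1$, and apply \eqref{prod:ineq} (first branch of \eqref{cond:k}, taking $k=m\ge 3$) to get $\lsm \nmm{\brv}_m\nmm{V}_m$. Combined with \eqref{nmm:equiv}, this yields the desired $\mm{V}{m}^2$ bound. For $\nmm{\ptau((\DM-I)\cL(V))}_{m-2}$, the product rule in $\ptau$ produces the two terms $(\ptau\DM)\cL(V)$ and $(\DM-I)\ptau\cL(V)$; the coefficient bounds \eqref{es:DM:mm} and \eqref{es:ptau:DM:mm} give $\nmm{\DM - I}_m \lsm \nmm{\brr}_m$ and $\nmm{\ptau\DM}_{m-1} \lsm \nmm{\ptau\brr}_{m-1}$, both $\lsm \mm{V}{m}$ by \eqref{nmm:equiv}. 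Applying \eqref{prod:ineq} once more to these products against $\cL(V)$ (which is a first-order spatial operator, so $\nmm{\cL(V)}_{m-1} \lsm \mm{V}{m}$) produces the quadratic bound.

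For the vorticity nonlinearity $\bB(\brv, \brom)$, note that $\brom = \nt\brv$ already absorbs one spatial derivative, so $\nmm{\brom}_{m-1} \lsm \nmm{\brv}_m \lsm \mm{V}{m}$. Each summand of $\bB$ is a bilinear product of either $\brv$ with $\nabla\brom$, or $\nabla\brv$ with $\brom$ (plus the vortex-stretching term $\brom \cn \brv$ in 3D), so \eqref{prod:ineq} with $k=m$ gives $\nmm{\bB(\brv,\brom)}_{m-2} \lsm \nmm{\brv}_m\nmm{\brom}_{m-1} \lsm \mm{V}{m}^2$. Summing the three contributions and keeping the untouched $\sum_{k=1}^m\|\Vk\|_\LL$ term closes the estimate.

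The main obstacle, such as it is, is purely combinatorial: one must verify that whenever a mixed derivative of total order up to $m-1$ (coming from $\nmm{\cdot}_{m-2}$ applied to a first-order quantity) is distributed across two factors in these bilinear expressions, the resulting multi-indices fit inside the admissible region \eqref{cond:k} for a single choice of $k$. Because $m\ge 3$, the first branch of \eqref{cond:k} with $k=m$ suffices uniformly, with no factor ever being forced to carry more than $m$ derivatives; consequently all product estimates go through without loss. No evolutionary or Gronwall argument is invoked -- the statement is genuinely diagnostic, at a single frozen $\tau$.
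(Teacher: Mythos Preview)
Your proposal is correct and follows essentially the same route as the paper, which simply says to combine the displayed pre-lemma inequality with \eqref{prod:ineq}, \eqref{es:DM:mm}, and \eqref{nmm:equiv}. The only cosmetic difference is that you split $\ptau\bigl((\DM-I)\cL(V)\bigr)$ via the product rule and invoke \eqref{es:ptau:DM:mm} for the $\ptau\DM$ piece, whereas the paper implicitly uses the trivial inclusion $\nmm{\ptau f}_{m-2}\le\nmm{f}_{m-1}$ and bounds $\nmm{(\DM-I)\cL(V)}_{m-1}$ directly from \eqref{es:DM:mm} alone; both are valid and yield the same quadratic bound.
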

The extra quadratic term will cause no trouble, due to the scaling argument below \eqref{apriori}. 
 
For the $k=0$ case of  \eqref{recur:k:k1},  upon applying the same   \eqref{prod:ineq},   \eqref{es:DM:mm}, \eqref{nmm:equiv}, one obtains $\|V\|_{\H^m}\lsm  \|\ptau V  \|_{\H^{m-1 }}+\mm{\brom }{m-1}+  \mm{V }{m }^2+\|V\|_\LL$.   Combine it with Lemma \ref{thm:diagn:1} to get
\[2d_1\|  V  \|_{\H^{m  }}\le  \mm{\brom }{m-1}+  \mm{V }{m }^2+\sum_{k=0}^m\|\Vk  \|_{\LL  }\]
for some constant $d_1>0$. %Notice   the vorticity $\brom$ stays as is. 
Then,  relax one of the  $\mm{V}{m}$ factors on the RHS to $ d_1$ and   absorb the associated quadratic term into the LHS, proving,  
 \begin{lemma}\label{thm:diagn} {\bf(Diagnostic estimates)}
Consider $V(\tau,x)$, a solution of \eqref{sys:V} in the a priori setting of \eqref{apriori}. If furthermore $\mm{V}m\le d_1$ for some constant $d_1$   solely depending on $m,\Omega$ and pressure law $\pres(\ccdot)$, then,  
\[ d_1\|  V  \|_{\H^{m  }}\le  \mm{\brom }{m-1}+      \sum_{k=0}^m\|\Vk  \|_{\LL  }.\]
 %Same relation holds true if we remove all $  $ from above. 
  \end{lemma}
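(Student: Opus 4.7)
The plan is to use the elliptic estimate \eqref{elliptic:LK} at the spatial level $k=m$ to bound $\|V\|_{\H^m}$ in terms of $\|\cL(V)\|_{\H^{m-1}}$, $\|\nt\brv\|_{\H^{m-1}}$, and $\|V\|_{\LL}$, and then trade the $\cL(V)$ piece for a time derivative via the evolution equation \eqref{sys:V}. The curl term $\|\nt\brv\|_{\H^{m-1}}$ is exactly (a piece of) $\mm{\brom}{m-1}$, so it contributes only the desired quantity on the right-hand side.

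First I would specialise the recursive inequality \eqref{recur:k:k1} to $k=0$, which is nothing more than applying \eqref{elliptic:LK} with $k=m$ together with the identity $-\cL(V)=\ptau V+\brv\cn V+(\DM-I)\cL(V)$ coming from \eqref{sys:V}. This gives
\[
\|V\|_{\H^m}\lsm \|\ptau V\|_{\H^{m-1}} + \nmm{\brv\cn V}_{m-1} + \nmm{(\DM-I)\cL(V)}_{m-1} + \mm{\brom}{m-1} + \|V\|_{\LL}.
\]
Next I would estimate the two nonlinear pieces using the Sobolev product bound \eqref{prod:ineq} and the matrix bound \eqref{es:DM:mm} for $\DM-I$, both of which are applicable thanks to the a priori assumption \eqref{apriori}; combined with the norm equivalence \eqref{nmm:equiv}, each such term is controlled by $\mm{V}{m}^2$.

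Then I would invoke the already established Lemma \ref{thm:diagn:1} to replace $\|\ptau V\|_{\H^{m-1}}$ by $\mm{V}{m}^2+\sum_{k=1}^m\|\Vk\|_{\LL}$. At this point the inequality has the schematic form
\[
\|V\|_{\H^m}\le C\mm{V}{m}^2 + C\,\mm{\brom}{m-1} + C\sum_{k=0}^m\|\Vk\|_{\LL},
\]
where $C$ depends only on $m$, $\Omega$, and $\pres(\cdot)$. I would then view $\mm{V}{m}^2=\mm{V}{m}\cdot\mm{V}{m}$ and use the smallness hypothesis $\mm{V}{m}\le d_1$ with $d_1$ chosen so small (depending on $C$, hence on $m,\Omega,\pres$) that one factor of $\mm{V}{m}$ gives a contribution that can be absorbed into the left, yielding the clean bound $d_1\|V\|_{\H^m}\le \mm{\brom}{m-1}+\sum_{k=0}^m\|\Vk\|_{\LL}$ after rescaling the constant.

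The main obstacle, conceptually, is that $\mm{V}{m}$ is the mixed norm whereas the left-hand side is the purely spatial $\H^m$ norm; the two are only equivalent under the a priori bound \eqref{apriori}, and the absorption argument must be set up so that the quadratic term really is of the form $\mm{V}{m}\cdot\|V\|_{\H^m}$ (or bounded by it after \eqref{nmm:equiv}) rather than of a form that would spoil the bootstrap. Verifying in detail that every product term produced by Leibniz expansions of $\brv\cn V$ and $(\DM-I)\cL(V)$ falls under one of the two admissible regimes of \eqref{cond:k} is the routine but delicate point; once this is done, the absorption step is immediate.
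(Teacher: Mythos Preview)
Your proposal is correct and follows essentially the same route as the paper: specialise \eqref{recur:k:k1} to $k=0$, bound the nonlinear pieces by $\mm{V}{m}^2$ via \eqref{prod:ineq}, \eqref{es:DM:mm}, \eqref{nmm:equiv}, invoke Lemma~\ref{thm:diagn:1} to trade $\|\ptau V\|_{\H^{m-1}}$ for $\mm{V}{m}^2+\sum_{k=1}^m\|\Vk\|_{\LL}$, and then absorb the quadratic term using $\mm{V}{m}\le d_1$. One small remark: in the $k=0$ case of \eqref{recur:k:k1} the norms on $\brv\cn V$ and $(\DM-I)\cL(V)$ are spatial $\H^{m-1}$ norms rather than the mixed $\nmm{\cdot}_{m-1}$ norms you wrote, but since the latter dominates the former this is harmless and the estimates still close exactly as you describe.
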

  
  \subsection{Prognostic estimates of $\|\ptau^k V\|_{\LL}$} Take  the     $\ptau^k$ derivative of \eqref{sys:V} and  single out the highest derivatives
\be\label{def:R:k}R^{(k)}:= -\sum_{j=1}^k{k\choose j}(\ptau^j\brv)\cn\ptau^{k-j}V+{k\choose j} (\ptau^j\DM)\cL(\ptau^{k-j}V)=\ptau(\Vk)+\brv\cn\Vk+ \DM\cL(\Vk). \ee
%The following proposition proves diagnostic estimates for this commutator term.
\begin{proposition}\label{thm:comm} (Diagnostic estimates of commutator)
Given a solution $V(\tau,x)$ to \eqref{sys:V} satisfying a priori assumption \eqref{apriori}. Then, for all  $k\in[0,m]$, %(we are being cautious not to include $k=m$) 
\[ \begin{split}\|R^{(k)} \|_{\LL}&\lsm\mm{ \ptau V }{m-1} \mm{ V}{m}.\end{split}\] 
\end{proposition}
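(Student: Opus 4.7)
The plan is to expand $R^{(k)}$ via its definition and bound each of its constituent terms separately by $\nmm{\ptau V}_{m-1}\nmm{V}_m$ using Sobolev-type product estimates, carefully exploiting that every summand in \eqref{def:R:k} carries a factor with $j\ge1$ time derivatives applied to a coefficient. Concretely, each term has the schematic form $(\ptau^j\brv)\cn(\ptau^{k-j}V)$ or $(\ptau^j\DM)\,\cL(\ptau^{k-j}V)$ with $j\in[1,k]$, and because $j\ge 1$ we can factor $\ptau^j\brv=\ptau^{j-1}(\ptau\brv)$ and similarly for $\DM$. This conversion of the $j$-th time derivative of $\brv$ (resp.\ $\DM$) into a $(j-1)$-st derivative of $\ptau\brv$ (resp.\ $\ptau\DM$) is what allows the first factor to contribute $\nmm{\ptau V}_{m-1}$ rather than $\nmm{V}_m$.

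The $\DM$-terms are reduced to the $\brv$-terms via the chain/product rule together with the coefficient bound \eqref{es:ptau:DM:mm}, namely $\nmm{\ptau\DM}_{m-1}\lesssim\nmm{\ptau\brr}_{m-1}\lesssim\nmm{\ptau V}_{m-1}$, under the a priori hypothesis $\nmm{V}_m\le 1$ and $|\brr|\le\brR$. Once this reduction is done, every term is of the form $\pa^{\alpha'}(\ptau F)\cdot\pa^\gamma V$ with $F$ either a component of $V$ or of $\DM-I$, with multi-index counts $|\alpha'|=j-1$ and $|\gamma|=k-j+1$, hence total order $|\alpha'|+|\gamma|=k\le m$.

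To estimate such a product in $\LL$, I would apply a Sobolev product estimate. When $k\le m-1$, the inequality \eqref{prod:ineq} applied with parameter $m-1\ge 2$ directly yields $\nmm{\ptau F}_{m-1}\nmm{V}_{m-1}\le\nmm{\ptau V}_{m-1}\nmm{V}_m$. In the critical boundary case $k=m$, \eqref{prod:ineq} alone does not match the asymmetric target norm, and I would instead split the $\LL$-product by hand using the Sobolev embeddings $\H^2\hookrightarrow\sL^\infty$ and $\H^1\hookrightarrow\sL^4$ in dimension $N=2,3$: if one of $|\alpha'|,|\gamma|$ is at most $m-3$, its factor sits in $\sL^\infty$ (via $\H^2$ after the remaining derivatives) and the other in $\LL$; otherwise both factors land in $\sL^4$ (via $\H^1$) and H\"older closes the estimate. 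Since $j\ge 1$ forces $|\alpha'|\le m-1$, the first factor is always absorbed into $\nmm{\ptau V}_{m-1}$ while the second is absorbed into $\nmm{V}_m$.

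The main obstacle is precisely this critical case $k=m$, where the total order of derivatives equals $m$ and the naive application of the single-parameter estimate \eqref{prod:ineq} would only give $\nmm{V}_m^2$ rather than the desired $\nmm{\ptau V}_{m-1}\nmm{V}_m$; one must genuinely exploit the asymmetry granted by $j\ge 1$ together with refined $\sL^p$ splittings. Once that step is carried out, summing the resulting bounds over the finitely many $j\in[1,k]$ (with combinatorial constants absorbed into $\lesssim$) delivers the claimed inequality uniformly in $k\in[0,m]$.
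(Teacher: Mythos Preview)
Your strategy coincides with the paper's: expand $R^{(k)}$, observe that every summand carries a factor $\ptau^j\brv$ or $\ptau^j\DM$ with $j\ge1$, and apply the product estimate \eqref{prod:ineq}. The execution diverges only at the top order $k=m$, where you treat the second factor as $\pa^\gamma V$ with $|\gamma|=k-j+1$ and then split by hand into $\sL^\infty\times\LL$ or $\sL^4\times\sL^4$. The paper avoids this entirely by also absorbing the spatial gradient into the second base function: each summand is viewed as $\ptau^{j-1}(\ptau\brv\text{ or }\ptau\DM)\cdot\ptau^{k-j}(\pa_x V)$, so the total order on the base pair $(\ptau\brv\text{ or }\ptau\DM,\;\pa_x V)$ is $(j-1)+(k-j)=k-1\le m-1$. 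Then \eqref{prod:ineq} with parameter $m-1$ (valid since $m-1\ge2$) applies \emph{uniformly} for every $k\in[0,m]$, yielding $\|R^{(k)}\|_{\LL}\lsm\nmm{(\ptau\brv,\ptau\DM)}_{m-1}\nmm{\pa_x V}_{m-1}$; one finishes via \eqref{es:ptau:DM:mm}, \eqref{nmm:equiv}, and the trivial bound $\nmm{\pa_x V}_{m-1}\le\nmm{V}_m$. No case analysis is needed.

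Your hand-splitting also has a small calibration error. Since the target norms are asymmetric ($\nmm{\ptau F}_{m-1}$ versus $\nmm{V}_m$), the $\sL^\infty$ thresholds should be asymmetric too: $|\alpha'|\le m-3$ for the first factor but $|\gamma|\le m-2$ for the second. With your symmetric criterion ``one of $|\alpha'|,|\gamma|$ at most $m-3$'', the case $m=3$, $j=3$ produces $(|\alpha'|,|\gamma|)=(2,1)$, which your rule routes to $\sL^4\times\sL^4$; but $\ptau^{2}(\ptau F)$ lies only in $\LL$ when $\ptau F\in\H^{m-1}=\H^2$, so that pairing fails. The fix is immediate (use $\LL\times\sL^\infty$ since $|\gamma|=1\le m-2$), but the paper's absorption trick sidesteps the issue altogether.
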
\begin{proof}The case $k=0$ is trivial, so we consider $k\in[1,m]$.
Notice  that,  in the definition of $R^{(k)}$ in \eqref{def:R:k}, every product contains a factor as $\ptau\DM$, $\ptau\brv$ or their higher derivative, and another factor as $\nabla V$ or its higher derivative. Therefore, by  Sobolev inequalities \eqref{prod:ineq} with $k=m-1$,
\[\|R^{(k)} \|_{\LL}\lsm\nmm{(\ptau\DM,\,\ptau\brv)}_{m-1}\nmm{\pa_x V}_{m-1}. \]
Combine it with \eqref{es:ptau:DM:mm} and \eqref{nmm:equiv} to conclude the proof.
\end{proof}  
Before  obtaining     estimates of $\|\pt^k V\|_{\LL}$ in the next lemma, we carry out some calculation  for any $W$ with the same number of components as $V$ and with regularity $ W \in  \H^1(\Omega)$, $\ptau W \in  \LL(\Omega)$.
\[\begin{split} &2\intO(\Din  W )\cdot(\ptau W +\brv\cn W )  \\
\stackrel{(a)}{=}&    \intO\ptau (\Din W \cdot W )   +  \intO\brv\cn  (\Din W \cdot W )-\Bigl(\intO(\ptau\Din) W \cdot W+\intO(\brv\cn\Din) W \cdot W\Bigr) \\
\stackrel{(b)}{=}&\intO\ptau (\Din W \cdot W ) +  \intO\brv\cn  (\Din W \cdot W )-\intO(\ptau\brr+\brv\cn\brr)({d\over d\brr}\Din W \cdot W )\\
\stackrel{(c)}{=}&\intO\ptau (\Din W \cdot W )-\intO(\nc\brv)  (\Din W \cdot W )-\intO(\ptau\brr+\brv\cn\brr)({d\over d\brr}\Din W \cdot W )\\
\stackrel{(d)}{=}&{d\over d\tau}\intO\Din W \cdot W -\intO\nc\brv ( \Din-\DM_{1,1}{d\over d\brr}\Din ) W \ccdot W. \end{split}\]
Here, $(a)$ is by the product rule and the fact that $\DM$ is diagonal,   $(b)$ is by the chain rule,  $(c)$ is by   the Divergence Theorem and     $\brv\cdot\nm\bc=0$, and $(d)$ is a simple substitution via the mass equation of \eqref{sys:V} with $\DM_{1,1}$ denoting the first entry of matrix $\DM$. 

Next, add and subtract a $2\intO (\Din W)\ccdot(\DM\cL(W))=2\intO W\ccdot\cL(W)$ term and rearrange it,
\[\begin{split}{d\over d\tau}\intO\Din W \ccdot W =&\,  -2\intO W\ccdot\cL(W) +\intO\nc\brv ( \Din-\DM_{1,1}{d\over d\brr}\Din ) W \ccdot W\\&+2\intO(\Din  W )\ccdot\Big(\ptau W +\brv\cn W+\DM\cL(W)\Big)\end{split}  \]
Use \eqref{es:DM:n} to bound the maxima of $ \DM ,\Din , \pa_{\brr} \Din$, and  also use    $|\nc\brv|_{\Linf}\lsm \mm{V}{m }$ to arrive at, for  $W \in  \H^1(\Omega)$ and $\ptau W \in  \LL(\Omega)$,
\be\label{Wn}\begin{split}\left|{d\over d\tau}\wL{W}^2\right|\lsm&\,  \left|\intO W\ccdot\cL(W) \right|+\mm{V}{m } \wL{W}^2 \\&+ \|\ptau W  +\brv\cn W +\DM\cL(W)\|_{\LL}\wL{W}.\end{split}  \ee
Here, \[\wL{W}:=\Bigl(\intO\Din  W  \ccdot W  dx\Bigr)  .\]
\begin{lemma}\label{thm:progn}{\bf(Prognostic estimates)}
Consider $V(\tau,x)$ a solution to \eqref{sys:V} satisfying a priori assumption \eqref{apriori}. Then, for all $k\in[0,m]$ and $0\le\tau_1<\tau_2\le\tau^*$,\[  \wL{ \Vk }^2 \ttdiff \lsm \ttint \Bigl(   \wL{\Vk} +\mm{\ptau V}{m-1}\Bigr) \wL{\Vk}\mm{V}{m } .\] 
 %Here, the Lipschitz semi-norm is understood as in \eqref{def:Lip}. 
\end{lemma}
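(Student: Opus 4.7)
The plan is to apply the master differential inequality \eqref{Wn} with the choice $W = \Vk = \ptau^k V$, observing that the right-hand side of the stated lemma is exactly what falls out after identifying the three terms in \eqref{Wn} with quantities we already control from the rescaled system and Proposition \ref{thm:comm}.

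First, since $V$ solves \eqref{sys:V}, differentiating in $\tau$ shows that
\[\ptau \Vk + \brv \cn \Vk + \DM \cL(\Vk) = R^{(k)},\]
which is precisely the quantity singled out in \eqref{def:R:k}. Plugging $W = \Vk$ into \eqref{Wn} therefore yields
\[\left|{d\over d\tau}\wL{\Vk}^2\right| \lsm \left|\intO \Vk \ccdot \cL(\Vk)\right| + \mm{V}{m}\,\wL{\Vk}^2 + \|R^{(k)}\|_{\LL}\,\wL{\Vk}.\]

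Next I would eliminate the first term on the right via the antisymmetry property \eqref{pt:cL}. The boundary condition $\brv \ccdot \nm\bc = 0$, together with the static nature of $\pa\Omega$, propagates to $\ptau^k \brv \ccdot \nm\bc = 0$ for every $k \in [0,m]$; this is exactly the compatibility required in \eqref{pt:cL}, so $\intO \Vk \ccdot \cL(\Vk) = 0$. For the third term I would invoke Proposition \ref{thm:comm} to get $\|R^{(k)}\|_{\LL} \lsm \mm{\ptau V}{m-1}\mm{V}{m}$. Combining these gives the pointwise-in-$\tau$ bound
\[\left|{d\over d\tau}\wL{\Vk}^2\right| \lsm \mm{V}{m}\,\wL{\Vk}^2 + \mm{\ptau V}{m-1}\mm{V}{m}\,\wL{\Vk},\]
and integrating over $[\tau_1,\tau_2]$ yields exactly the claimed estimate after factoring a $\wL{\Vk}$ out of both terms.

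The main obstacle I anticipate is regularity: the identity \eqref{Wn} was derived under $W \in \H^1(\Omega)$ and $\ptau W \in \LL(\Omega)$, whereas the a priori assumption \eqref{apriori} only gives $\Vk \in \H^{m-k}$, which degenerates to merely $\LL$ at $k=m$. Likewise, the use of \eqref{pt:cL} requires $\Vk \in \H^1$. The standard fix is the mollification procedure already invoked in the vorticity step (extending $\Vk$ across $\pa\Omega$, convolving with a smooth kernel, restricting back to $\Omega$, then passing to the limit after establishing the inequality at the mollified level); the boundary term from the Divergence Theorem vanishes in the limit thanks to $\ptau^k \brv \ccdot \nm\bc = 0$. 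A minor secondary point is keeping careful track of the dependence of the implicit constants on the pressure law through the bounds \eqref{es:DM:n}--\eqref{es:DM:I} on $\DM$, but these have already been absorbed into the $\lsm$ notation and do not affect the structure of the estimate.
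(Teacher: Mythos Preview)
Your argument is correct and matches the paper for $k\in[0,m-1]$, but the handling of the top case $k=m$ has a genuine gap.

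The spatial mollification you propose (extend across $\pa\Omega$, convolve, restrict) works for the vorticity estimate \eqref{es:brom} because the key identity there, $\intO(\brv\cn f)f=-\tfrac12\intO(\nc\brv)|f|^2$, relies only on the boundary condition of the \emph{unmollified} coefficient $\brv$. In the present estimate, however, the term you must kill is $\intO\Vk\ccdot\cL(\Vk)$, whose vanishing via the Divergence Theorem requires the velocity component of $\Vk$ \emph{itself} to satisfy $\ccdot\,\nm\bc=0$. Spatial mollification of $\Vm$ destroys this boundary condition, so the boundary integral does not vanish at the mollified level, and there is no reason for it to vanish in the limit either: $\ptau^m\brv$ is only $\LL$, so its trace on $\pa\Omega$ is not even defined (note the compatibility condition in the Main Theorem is stated only for $k<m$). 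Your sentence ``the boundary term from the Divergence Theorem vanishes in the limit thanks to $\ptau^k\brv\ccdot\nm\bc=0$'' is therefore unjustified precisely at $k=m$.

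The paper resolves this by mollifying in \emph{time} rather than space: set $W^\delta:=\tfrac{1}{\delta}\int_\tau^{\tau+\delta}\Vm=\tfrac{1}{\delta}\,\Vml\big|_\tau^{\tau+\delta}$. Because $\Vml\in\H^1(\Omega)$ and $\ptau^{m-1}\brv\ccdot\nm\bc=0$ \emph{does} hold, the time-averaged $W^\delta$ inherits both the $\H^1$ regularity and the boundary condition, so \eqref{Wn} applies and $\intO W^\delta\ccdot\cL(W^\delta)=0$ exactly. The price is that $\ptau W^\delta+\brv\cn W^\delta+\DM\cL(W^\delta)$ is no longer exactly $R^{(m)}$; one must estimate the extra commutator terms coming from freezing $\brv$ and $\DM$ at time $\tau$, then pass $\delta\to0^+$.
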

Note  $k=m$ is included here. Also, for $k\ge1$, both sides are in some sense quadratic in $\ptau V$, which will eventually yield desirable bound on  the inflation of $\mm{\ptau V}{m-1}$.
\begin{proof}  
First, restrict the value of $k\in[0,m-1]$ so that $\Vk\in \H^1(\Omega)$ and   $\ptau^k\brv\ccdot\nm\bc=0$ is well-defined,    allowing us to apply the Divergence Theorem to have
\be\label{cL:Vn}\intO\Vk\ccdot\cL(\Vk)=0,\qquad\text{ if }k\in[0,m-1].\ee
Thus,  we set $W=\Vk$ in  \eqref{Wn} where we also     apply \eqref{cL:Vn}  to cancel out the first  term in the RHS   and     use     Proposition \ref{thm:comm}    to estimate the   last term to prove the lemma for $k\in[0,m-1]$.

Estimating the highest   derivative $\Vm$ requires more care because it is merely in $\LL(\Omega)$.   Thus,   \eqref{Wn} and   \eqref{cL:Vn} are not directly applicable here. One remedy is to apply mollification in time, namely a time filter, to increase time regularity. Here instead, we demonstrate the closely related time-averaging technique. 
Let $W$  in   \eqref{Wn} be the time average of $\Vm$, i.e. with small $\delta>0$,  \[W=W^\delta:={1\over\delta }\int_{\tau}^{\tau+\delta}\Vm={1\over\delta }\Vml\Big|_{\tau}^{\tau+\delta}\]Then,   $W^\delta \in\H^1(\Omega)$,  $\ptau W^\delta\in\LL(\Omega)$, so that \eqref{Wn} holds and also the Divergence Theorem  applies,\be\label{W:delta:0}\intO W^\delta\ccdot\cL(W^\delta)=0.\ee 
It remains to estimate the first factor of  the     last term in   \eqref{Wn}. At fixed time $\tau$, we have
\be\label{es:Vm}\begin{split}&\,\delta\ccdot\Bigl(\ptau W^\delta+\brv\cn W^\delta+\DM\cL(W^\delta)\Bigr)\\=&\,(\ptau \Vml+\brv\cn \Vml+\DM\cL( \Vml))\Big|_{\tau}^{\tau+\delta}\\&\,-\left(\brv\Big|_{\tau}^{\tau+\delta}\right)\cn \Vml(\tau+\delta,\cdot)-\left(\DM\Big|_{\tau}^{\tau+\delta}\right)\cL\big( \Vml(\tau+\delta,\cdot)\big) \end{split}\ee
In the RHS, the first term is indeed $\int_{\tau}^{\tau+\delta}\ptau R^{(m-1)}$. By  the same reasoning as used in   Proposition \ref{thm:comm}, we find   $\|\ptau R^{(m-1)}\|_\LL\lsm\mm{ \ptau V}{m-1} \mm{V}{m}$. The second last term has a factor $\brv\big|_{\tau}^{\tau+\delta}= \int_{\tau}^{\tau+\delta}\ptau\brv$ and we know    $|\ptau\brv|_{\Linf}\lsm\mm{\ptau \brv}{m-1}$. Similarly, the   last term has a factor $\DM\big|_{\tau}^{\tau+\delta}= \int_{\tau}^{\tau+\delta}\ptau\DM$ and we know  $|\ptau\DM|_{\Linf}\le |\DM'|_{\Linf}|\ptau\brr|_{\Linf}\lsm \mm{\ptau\brr}{m-1}$. Finally, notice that $\|\nabla\Vml\|_\LL+\|\cL(\Vml)\|_\LL\lsm \mm{V}{m}$ by \eqref{nmm:equiv}. Apply all these estimates  in the $\LL$ norm of \eqref{es:Vm} to obtain,
\[\begin{split}\delta\|\ptau W^\delta+\brv\cn W^\delta+\cL(W^\delta)\|_{\LL }\lsm&\int_{\tau}^{\tau+\delta}\mm{ \ptau V}{m-1}\max_{[\tau,\tau+\delta]} \mm{V}{m}   \end{split}\]
Now, apply this estimate to $\int_{\tau_1}^{\tau_2-\delta}$ of  \eqref{Wn} with $W=W^\delta$, apply \eqref{W:delta:0}  and   pass the limit as $\delta\to 0+$  to prove the lemma for $k=m$. Note by $V\in\bC([0,\tau^*],\H^m)$,   we have $\ptau^k V\in\bC([0,\tau^*],\H^{m-k})$, and thus $\displaystyle\lim_{\delta\to0}\bigl\|W^\delta-\Vm\bigr\|_\LL(\tau)=0$.\end{proof}

\subsection{Estimates of $\|(\rho,\vv)\|_{\H^m}$   and   $\|\pt(\rho,\vv)\|_{\H^{m-1}}$ } \label{subset:uniform}We will still mostly work with $V=\ep( r, \vv)$, only reconnecting with $\vv$ and $r\approx\rho$ in Theorem \ref{thm:uni:es} near the end of this section.
The goal to keep in mind is the existence time in terms of $\tau$ at the same order of $1/\|V_0\|_{\H^m}$ with the $\|V\|_{\H^m}$ norm only inflated by a constant. We also need to see similar inflation of $\mm{\ptau V }{m-1}$ but will tolerate some additional term that is {\it quadratic} in $\|V\|_{\H^m}$.

% The following lemma is mostly an exercise of integral and differential inequalities of the Gronwall type.

\begin{lemma}\label{thm:es:V}{\bf (Estimates on $V$ and $\ptau V$)} Consider a solution of \eqref{sys:V},    $V\in\bC([0,\tau^*],\H^m(\Omega))$ with $m \ge3$. Then, there exist positive constants  $\tau^\sharp,C_v,C_1,C_2,C_3 $ that solely depend on $m,\Omega$ and pressure law $\pres(\ccdot)$ so that, if  $\|V\|_{\bC([0,\tau^*],\H^{m} )}\le C_v$, then for  times $\tau\in[0,\tau^*]\bigcap[0, \tau^\sharp  /\mm{V_0}m]$,
\begin{align} 
\label{es:V:Hm}\mm{V }m&\le C_1  \mm{V_0}m\\
\label{es:V:Hm1}\mm{\ptau V }{m-1}&\le C_2 \mm{\ptau V_0}{m-1} +C_3  \mm{V_0}{m}^2\end{align}\end{lemma}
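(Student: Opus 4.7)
My plan is a two-stage bootstrap argument. First I would establish the $\H^m$ bound \eqref{es:V:Hm} using a composite energy that bundles the vorticity with all time derivatives; then I would refine the argument to track the time-derivative norm \eqref{es:V:Hm1} separately, so that the bound inherits the well-preparedness of the initial data.

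\textbf{Stage 1 (global control of $\mm{V}m$).} Define the composite energy
\[
A(\tau) := \|\brom\|_{\H^{m-1}}^2(\tau) + \sum_{k=0}^m \wL{\ptau^k V}^2(\tau).
\]
By the diagnostic Lemma \ref{thm:diagn}, as long as $\mm{V}m \le d_1$, we have $\mm{V}m \lesssim \sqrt{A}$. Summing Lemma \ref{thm:progn} over $k=0,1,\ldots,m$ and adding the vorticity bound \eqref{es:brom}, then using Lemma \ref{thm:diagn:1} to control $\mm{\ptau V}{m-1} \lesssim \mm{V}m^2 + \sum_{k\ge 1} \wL{\ptau^k V}$, yields a differential inequality of the form
\[
A(\tau) - A(0) \lesssim \int_0^\tau \bigl(A^{3/2} + A^2\bigr)\, ds.
\]
Choosing $C_v$ small so that $A \lesssim 1$ absorbs $A^2$ into $A^{3/2}$, the resulting $A' \lesssim A^{3/2}$ yields $A(\tau) \le C A(0)$ for $\tau \le \tau^\sharp/\sqrt{A(0)} \sim \tau^\sharp/\mm{V_0}m$. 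Through the diagnostic again, this gives \eqref{es:V:Hm}.

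\textbf{Stage 2 (sharp control of $\mm{\ptau V}{m-1}$).} Define the reduced energy $B(\tau) := \sum_{k=1}^m \wL{\ptau^k V}^2$. Applying Lemma \ref{thm:progn} only for $k \ge 1$ and substituting $\mm{\ptau V}{m-1} \lesssim \mm{V}m^2 + B^{1/2}$ from Lemma \ref{thm:diagn:1} gives
\[
\frac{d}{d\tau} B \lesssim \bigl(B^{1/2} + \mm{V}m^2\bigr)\, B^{1/2}\, \mm{V}m.
\]
Using the already established $\mm{V}m \le C_1 \mm{V_0}m$ and writing $b := \sqrt{B}$, the inequality linearizes to
\[
b'(\tau) \lesssim \mm{V_0}m\, b(\tau) + \mm{V_0}m^3.
\]
Grönwall's lemma then yields $b(\tau) \lesssim b(0) + \tau\,\mm{V_0}m^3 \lesssim \mm{\ptau V_0}{m-1} + \mm{V_0}m^2$ on $\tau \lesssim 1/\mm{V_0}m$; re-applying Lemma \ref{thm:diagn:1} converts this $B$-bound into \eqref{es:V:Hm1}.

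The main obstacle is the bookkeeping that preserves the linear/quadratic structure of \eqref{es:V:Hm1}: the cubic forcing $\mm{V_0}m^3$ in the $b$-equation arises from the $\mm{V}m^2$ term in the diagnostic Lemma \ref{thm:diagn:1}, and it is precisely integration over the interval of length $1/\mm{V_0}m$ that converts it into the quadratic correction $C_3\mm{V_0}m^2$. A secondary delicate point is handling the highest-order $\ptau^m V$, which is only $\LL$ in space; this is already addressed inside Lemma \ref{thm:progn} via the time-averaged surrogate $W^\delta$, and I would simply cite that treatment. Finally, the a priori smallness $\|V\|_{\bC([0,\tau^*],\H^m)} \le C_v$ closes the standard continuation argument: one fixes $C_v$ so that $C_1 C_v \le d_1$ keeps the diagnostic estimate valid throughout, extending the bounds up to $\tau^* \wedge (\tau^\sharp/\mm{V_0}m)$.
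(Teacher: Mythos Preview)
Your proposal is correct and follows essentially the same two-stage scheme as the paper: the paper's energies $\Phi^2$ and $\phi^2$ are exactly your $A$ and $B$, and the paper likewise first closes a cubic integral inequality for $\Phi$ via ODE comparison and then runs a linear Gronwall argument for $\phi$ using the already-obtained bound $\mm{V}m\le C_1\mm{V_0}m$. The only cosmetic difference is that in Stage~1 the paper bounds $f=\mm{\ptau V}{m-1}$ directly by $F=\mm{V}m$ via \eqref{nmm:equiv}, avoiding the extra $A^2$ term you absorb, and it states the prognostic estimates as integral inequalities rather than differential ones (so the ODE comparison should be justified via the integral form, as you implicitly do).
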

Notice, under the hyperbolic rescaling \eqref{hyp:scale}, one has $\mm{V}{m}=\ep\mm{(r,\vv)}{m}$ and $\mm{\ptau V}{m}=\ep^2\mm{\pt(r,\vv)}{m}$ and such substitution would not change the structure of \eqref{es:V:Hm}, \eqref{es:V:Hm1}.

\begin{proof} We pick constant $C_v$ so that $\|V\|_{\bC([0,\tau^*],\H^{m} )}\le C_v$ implies a priori assumption \eqref{apriori} and   $\mm{V}m\le d_1$ as required by Lemma \ref{thm:diagn}. 

Introduce the shorthand notations
\[\begin{aligned}&\qquad F(\tau):=\mm{V}{m}(\tau),&&f (\tau):=\mm{\ptau V}{m-1}(\tau),\\&\qquad \Phi(\tau) :=  \Bigl(\| \brom\|_{\H^{m-1}}^2+\wL{V}^2+\phi^2(\tau)\Bigr)^{1/2},&&\phi(\tau):=  \Bigl(\sum_{k=1}^m\wL{\Vk}^2\Bigr)^{1/2},\end{aligned}\]
where lowercase $f, \phi$   involve at least one  $\ptau$ derivatives and our eventual goal is to estimate $F,f$.

By definition and $|\DM|\in[1/2,2]$, we have  $\phi\le\sqrt2\nmm{\ptau V}_{m-1}$ and $\Phi\le\sqrt2\nmm{V}_m$. Combine it with \eqref{nmm:equiv} to get
 \begin{align}\label{G:F}   \Phi\lsm F,&\qquad  \phi\lsm f. \\
\intertext{Meanwhile, we have been gathering diagnostic estimates in Lemmas \ref{thm:diagn:1}, \ref{thm:diagn}, i.e.}\label{ddd}    F   \lsm\Phi,&\qquad  f \lsm (F^2+\phi)  ,  \\
%\label{ddd} f \lsm F \suptt F+\phi   ,&\qquad  F  \lsm\Phi  \\
 \intertext{and prognostic estimates in \eqref{es:brom} and Lemma \ref{thm:progn} which   sum up to }
\label{ppp}   \ttdiff \Phi^2 \lsm \ttint ( \Phi +f)\Phi F,& \qquad  \phi^2\ttdiff \lsm\ttint (\phi+f)\phi F.\end{align}
%In the last estimate above, we used Sobolev inequality $|\nc\brv|_{\Linf}\lsm F$.  

(i). Estimate of $F(\tau)$. In the first part of   \eqref{ppp},   relax $f$ to $F$ a la \eqref{nmm:equiv} and relax $F$ to $\Phi$  a la  \eqref{ddd} to obtain 
$  \Phi^2 \ttdiff   \le 2c_1  \displaystyle \ttint \Phi^3$ for some constant $c_1$. Thus,  $\Phi^2(\tau)\le \Phi^2(0)+2c_1\displaystyle\int_0^\tau \Phi^3$ and by the continuity of $\Phi(\tau)$ and the comparison principle,    $$\Phi\le \tilde{\Phi}\;\;\text{ solving }\;\; {d\over d\tau}(\tilde{\Phi} )^2=2c_1(\tilde{\Phi})^3,\quad \tilde{\Phi}(0)=\Phi(0) $$  
\[\implies \Phi(\tau)\le \tilde{\Phi}(\tau)={\Phi(0)\over 1-c_1  \Phi(0)\tau} \]
as long as the RHS is bounded.  Thus \[1-c_1  \Phi(0)\tau\ge1/2\implies \Phi(\tau) \le 2\Phi(0).\]
Then, by the equivalence of $F,\Phi$ as in the first parts of \eqref{G:F}, \eqref{ddd}, we proved \eqref{es:V:Hm} as well as the $\tau$ interval   prescribed above it.

(ii). Estimate of $ f(\tau)$.  Combine the second parts of  \eqref{ddd}, \eqref{ppp}, and relax $F(\tau)$ to $F(0)$ a la \eqref{es:V:Hm},\[\phi^2\ttdiff \le2c_2\ttint (\phi +F^2(0))\ccdot\phi\ccdot F(0).\]  
 By the continuity of $\phi(\tau)$ and the comparison principle, $$\phi\le  \tilde\phi\;\;\text{ solving }\;\;{d\over d\tau}( \tilde\phi)^2=2c_2( \tilde\phi+F^2(0))\ccdot \tilde\phi \ccdot F(0)  ,\quad \tilde\phi(0)=\phi(0)$$
 \[\implies\phi(\tau)\le \tilde\phi (\tau)= -F^2(0)+e^{c_2 F(0) \tau}(\phi(0)+F^2(0)). \]
Combine it with the $\tau$ interval above \eqref{es:V:Hm} and the second parts of \eqref{G:F}, \eqref{ddd} to prove \eqref{es:V:Hm1}.
 \end{proof}

 This lemma leads to the final theorem of this section.
\begin{theorem}\label{thm:uni:es}{\bf(Uniform estimates)}
 Under the same hypotheses as Main Theorem \ref{thm:main} with $\Eo,\Eto$ equivalently given in \eqref{E:rho} namely, $\Eo=\|(\rho_0,\vv_0)\|_{\H^{m}}$, $\Eto=\|\pt(\rho_0,\vv_0)\|_{\H^{m-1}}$, there exist constants $E^*,T^\sharp,C^\sharp,C_t^\sharp,C_\cL^\sharp $ that solely depend on $m,\Omega$ and pressure law $\pres(\ccdot)$ so that,  \begin{subequations}\be\label{uni:es:00}\Eo\le E^*/\ep\text{\;\; implies there exists a unique $\bC^1$ solution for\;\; }t\in [0,{T^\sharp/ \Eo} ].\ee
 More precisely,     \begin{align} \label{uni:es:0}
 \|(\rho,\vv)\|_{\bC([0,T^\sharp/\Eo],\H^{m} )}&\le C^\sharp \Eo,\\  \label{uni:es:1}
 \|\pt(\rho,\vv)\|_{{\bC([0,T^\sharp/\Eo],\H^{m-1} )}}&\le C^\sharp_t( \Eto+\Eo^2),\\  \label{uni:es:2}
 \|\cL(\rho,\vv)\|_{{\bC([0,T^\sharp/\Eo],\H^{m-1} )}}&\le C^\sharp_\cL \,\ep\,( \Eto+\Eo^2). \end{align}\end{subequations}
 \end{theorem}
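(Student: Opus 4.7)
My plan is to apply Lemma \ref{thm:es:V} to the rescaled system \eqref{sys:V} and then translate its conclusions back to $(\rho,\vv)$ via the hyperbolic scaling \eqref{hyp:scale}.

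First I would transfer the initial data. The definition \eqref{def:r} combined with Taylor expansion gives $r = \rho + O(\ep\rho^2)$, so the product inequalities \eqref{gen:Sob} yield $\|(r_0,\vv_0)\|_{\H^m} \lesssim \Eo(1+\ep\Eo)$ and $\|\pt(r_0,\vv_0)\|_{\H^{m-1}} \lesssim \Eto(1+\ep\Eo)$. Under $\Eo \le E^*/\ep$ with $E^*$ chosen small, $\ep\Eo$ stays bounded, so the hyperbolic rescaling yields
\[
\|V_0\|_{\H^m} = \ep\|(r_0,\vv_0)\|_{\H^m} \lesssim \ep\Eo, \qquad \|\ptau V_0\|_{\H^{m-1}} = \ep^2\|\pt(r_0,\vv_0)\|_{\H^{m-1}} \lesssim \ep^2 \Eto,
\]
and $|\brr_0|_{\Linf} \lesssim \ep\Eo \le \brR$ by Sobolev embedding. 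A small enough $E^*$ also ensures $\|V_0\|_{\H^m} \le C_v$, activating Lemma \ref{thm:es:V}.

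Next, the $\H^m$ local well-posedness of \eqref{Euler:rhtot} from \cite{Sch:Euler} transfers to \eqref{sys:V}, giving a maximal existence interval $[0,\tau^*]$. A standard continuity argument combined with Lemma \ref{thm:es:V} keeps the a priori assumption \eqref{apriori} valid as long as $\|V\|_{\H^m}$ stays below $\min\{1, C_v, \brR\}$; since the Lemma only amplifies $\|V_0\|_{\H^m}$ by the fixed constant $C_1$, tightening $E^*$ so that $C_1 E^* < \min\{1, C_v, \brR\}$ (up to Sobolev factors) closes the bootstrap and extends $\tau^*$ past $\tau^\sharp/\|V_0\|_{\H^m}$. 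Unwinding $\tau = t/\ep$ and $\|V_0\|_{\H^m} \lesssim \ep\Eo$ converts this interval into $t \in [0, T^\sharp/\Eo]$ for a constant $T^\sharp$, and the two conclusions of Lemma \ref{thm:es:V} become
\[
\|(r,\vv)\|_{\H^m} \lesssim \Eo, \qquad \|\pt(r,\vv)\|_{\H^{m-1}} \lesssim \Eto + \Eo^2
\]
(the $\ep$ and $\ep^2$ factors cancel on both sides). Converting $(r,\pt r)$ back to $(\rho,\pt\rho)$ via the same Taylor relation produces \eqref{uni:es:0} and \eqref{uni:es:1}.

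Finally, \eqref{uni:es:2} follows immediately from \eqref{E0:E1}, which gives
\[
\|\cL(\rho,\vv)\|_{\H^{m-1}} \le \ep\|\pt(\rho,\vv)\|_{\H^{m-1}} + C\ep\|(\rho,\vv)\|_{\H^m}\|(\hep(\rho),\vv)\|_{\H^m};
\]
since $\hep(\rho)=O(\rho)$ makes the product term $\lesssim \ep\Eo^2$ via \eqref{uni:es:0}, combining with \eqref{uni:es:1} yields \eqref{uni:es:2}. I expect the only genuine subtlety to be the bookkeeping above: threading a single $E^*$ that simultaneously satisfies the smallness thresholds of Lemma \ref{thm:es:V}, the a priori assumption \eqref{apriori}, and the Taylor control of $r - \rho$. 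All these constants depend only on $m$, $\Omega$, and $\pres(\ccdot)$, so the choice is purely mechanical.
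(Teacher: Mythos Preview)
Your proposal is correct and follows essentially the same route as the paper: reduce to the rescaled $V$-system, invoke Lemma~\ref{thm:es:V} with a continuity/bootstrap argument to secure the a priori bound \eqref{apriori}, then undo the hyperbolic scaling. The only cosmetic difference is that for \eqref{uni:es:2} the paper stays in the $V$-formulation and reads off $\|\cL(V)\|_{\H^{m-1}}$ directly from \eqref{sys:V} together with \eqref{es:DM:mm}, whereas you translate back to $(\rho,\vv)$ first and then apply \eqref{E0:E1}; the two computations are interchangeable.
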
\begin{proof}The short time existence of classical solutions is established in \cite{Sch:Euler}, so we only prove the estimates here. The continuation method is always at our disposal, since the compatibility condition $(\pt^k\vv_0)\ccdot\nm\bc=(\ptau^k\brv_0)\ccdot\nm\bc=0$ is invariant under hyperbolic rescaling \eqref{hyp:scale}.
 
First of all, by   the close relation of $r$ and $\rho$ in \eqref{def:r} namely $\brr=\ep r=\pres(1+\ep\rho)-\pres(1)$ and $\ep\rho=\pres^{inv}(1+\ep r)-\pres^{inv}(1)$, we can use similar technique for proving \eqref{es:DM:mm}, \eqref{es:ptau:DM:mm} to show that
\begin{align*} \mm{\ep \rho }m\le1,\;\;|\ep\rho|\le1/2 &\implies\mm{ r}m\lsm\mm{ \rho }m,\;\;\mm{ \pt r }{m-1}\lsm\mm{ \pt \rho  }{m-1},\\
 \mm{\ep r }m\le1,\;\;|\ep r|\le\brR &\implies\mm{\rho}m\lsm\mm{r}m,\;\;\mm{ \pt\rho }{m-1}\lsm\mm{ \pt r  }{m-1}.\end{align*}
Therefore, by applying hyperbolic rescaling \eqref{hyp:scale} to the target conclusions, it suffices to show there exist    universal constants $e^*,\tau^\sharp ,c_0,c_1, c_2$, suitably chosen so that,\begin{subequations}
\be\label{uni:es:00V} \mm{V_0}{m}\le e^*\text{\;\; implies there exists a unique $\bC^1$ solution for\;\;  }\tau\in [0,\tau^\sharp /\mm{V_0}{m} ], \ee
and  that  \begin{align}\label{uni:es:0V}\|V\|_{\bC([0,\tau^\sharp /\mm{V_0}{m}],\H^{m} )}&\le c_0\mm{V_0}{m},\\ 
 \label{uni:es:1V}\|\ptau V\|_{ {\bC([0,\tau^\sharp /\mm{V_0}{m}],\H^{m-1} )}}&\le c_1( \mm{\ptau V_0}{m-1}+\mm{V_0}{m}^2),\\
 \label{uni:es:2V}\|\cL(V)\|_{ {\bC([0,\tau^\sharp /\mm{V_0}{m}],\H^{m-1} )}}&\le  c_2  ( \mm{\ptau V_0}{m-1}+\mm{V_0}{m}^2). \end{align}\end{subequations}
 %This reformulation respects  the hyperbolic rescaling \eqref{hyp:scale}.
 
Indeed, choose $e^*=C_v/C_1$ with $C_1>1$ and $C_v$  used in Lemma \ref{thm:es:V}.    Then, by continuity argument and Lemma \ref{thm:es:V}, the a priori assumption $\mm{V}m\le C_v$ as well as \eqref{uni:es:0V}, \eqref{uni:es:1V} remain true in the time interval given in \eqref{uni:es:00V}. Finally, \eqref{uni:es:2V} is by a simple deduction from  \eqref{uni:es:0V}, \eqref{uni:es:1V}, the $\ep$-free formulation \eqref{sys:V}, Sobolev inequalities and bounds of $\DM$ in \eqref{es:DM:mm}.
 
  \end{proof}

  \section{\bf Proof of the Main Theorem and Concluding Remarks}\label{sec:conclude}
  Now we prove the Main Theorem \ref{thm:main} using the time-averaging estimates in Theorem \ref{thm:ave:es} and the $\ep$-independent estimates in Theorem \ref{thm:uni:es}.
  
  \begin{proof}(Main Theorem \ref{thm:main}). First, regarding the time interval of validity, by  \eqref{uni:es:0} of Theorem \ref{thm:uni:es},  replace $T^\flat$ with $T^\sharp/\Eo$ in the  last line of Theorem \ref{thm:ave:es}  to get \be\label{T:bounds}T\in\bigl[0,T^\sharp/\Eo\bigr]\bigcap \bigl[0,\,  \displaystyle{D_2/ \esssup_{[0,T^\sharp/\Eo]}\|( \rho, \vv ,\vl)   \|_{\H^{m } }}\bigr].\ee
  By  estimate \eqref{uni:es:0} again, and by a similar estimate well known to be true for $\vl$ (e.g. \cite[Ch.  17, Thm. 3.2]{Taylor:III}), we further shorten the second time interval to $[0, D_2/(C^\sharp \Eo)]$. Therefore, take   $T^*:=\min\{T^\sharp,D_2/C^\sharp\}$ to make both Theorem \ref{thm:ave:es} and \ref{thm:uni:es}   valid for $T\in[0,T^*/\Eo]$.
  
  On this time interval, Theorem \ref{thm:ave:es} guarantees
  \[\eesup\|\vl-\vp\|_{ \H^{m-3} } \lsm\,\ep\,\eesup\bigl\|\cL(\rho, \vv)\bigr\|_{\H^{m-1}} \esup\|\vp\|_{\H^m}\]
One $\ep$ factor  is   in place,  and \eqref{uni:es:2} of  Theorem \ref{thm:uni:es} guarantees another $\ep$ factor   from $\cL(\rho, \vv)$,
\be\label{main:almost}\sup_{t\in[0,{T }]}\|\vp-\vl\|_{\H^{m-3}}\lsm\,\ep^2\, (\Eto+\Eo^2)\esup\|\vp\|_{\H^m}.\ee

So, our last job is to bound $\|\vp\|_{\H^m}$ in terms of $\|\vp_0\|_{\H^m}$. Scale vorticity estimate \eqref{es:brom} back to variables $\om, t$ so that
$\|\omega\|^2_{\H^{m-1}} \Big|_{t_1}^{t_2}  \lsm\displaystyle\int_{t_1}^{t_2}  \|\vv\|_{\H^{m}}\|\omega\|^2_{\H^{m-1}}$. 
Apply energy method to  \eqref{UP:B}, noting     $\vp\ccdot\nm\bc=\vq\ccdot\nm\bc=0$, to get
\(\|\vp\|^2_{\LL }\Big|_{t_1}^{t_2}\lsm \displaystyle\int_{t_1}^{t_2}\|\vv\|_{\H^{m}} \|\vp\|_{\LL }^2 \).
Combine these two Gronwall inequalities with elliptic estimate \eqref{es:cp:L2} to obtain 
$\|\vp\|_{\H^m}(T)\le \|\vp_0\|_{\H^m}e^{C T\esup\|\vv\|_{\H^m}}.$
Then, the exponent can be relaxed  to a constant due to    \eqref{T:bounds}. 
The proof   is complete!
  \end{proof}
  
  The proof of   Corollary \ref{thm:C} is as follows. First, regarding  $\|\vv-\vl\|_{ \H^{m-3}}$,     combine  \eqref{uni:es:2} of  Theorem \ref{thm:uni:es} with elliptic estimates from Proposition \ref{prop:cQ} to obtain $\|\vq\|_{ \H^{ m}}\lsm  \ep(\Eto+\Eo^2).$  By the  $O(\ep^2)$ estimate of $\|\vp-\vl\|_{ \H^{m-3}}$ from the Main Theorem,  this is more than enough to prove the $O(\ep)$ estimate of $\|\vv-\vl\|_{ \H^{m-3}}$. 
  
  Secondly, regarding  $\intT \vv-\vl =\int_0^T{\vq}+ \int_0^T{\vp-\vl}$,   one starts with estimate \eqref{ave:vq:cL} of $\ave{\vq}$ and estimate   \eqref{uni:es:2} of $\cL(\rho,\vv)$ to obtain
  \be\label{es:vq:9}\|\int_0^T{\vq}\|_{\H^{m-1}}\lsm \ep^2(\Eto+\Eo^2)(1+T\eesup \|(\rho,\vv)\|_{\H^{m }}) . \ee
  Combine it with \eqref{main:almost}, \eqref{T:bounds} to complete the proof of \eqref{es:C}.
  
Lastly, regarding the transport equations in Corollary \ref{thm:C}, we rewrite them as
\[\begin{aligned}\pt\thl+\vl\cn\thl&=0,\\\pt\th +\vp\cn\th &=-\vq\cn\th,\end{aligned}\]
and fit it into the notations of Lemma \ref{thm:linear:es},    $$\begin{array}{lll} v_1=\vl,\;\;&u_1=\thl,\;\;&\si_1=0,\\ v_2=\vp,\;\;&u_2=\th,\;\;&\si_2=-\vq\cn\th,\\
 \scrB= \H^m\bigcap\sX,\;\;&\scrB'=\H^m,\;\;&\scrB''=\H^{m-1} \end{array}$$ with $$\rb(v,u)=v\cn u,\qquad\scrH=\H^{m-3}.$$
 Then,   simply application of Sobolev inequalities and the fact that $\vl\in\sX$ can validate all assumptions of Lemma \ref{thm:linear:es}, including \eqref{es:b:3}. Therefore, we have, with $M:=\eesup\bigl\{\| \vl\|_{\H^m}  \bigr\}$, \be\label{theta:almost} \begin{aligned}&\eesup \|\thl-\th \|_{ \H^{m-3} }- \eesup\|\ave{\vq\cn\th}\|_{\H^{m-3}}\\\le\,& {e^{CMT}-1\over CM}\eesup\Bigl\{ \big\|\rb(\vl,\ave{\vq\cn\th})+\rb(\vl-\vp,\,\th)\big\|_{\H^{m-3}} \Bigr\}\\\lsm\,&
 T  \|\vl\|_{\H^m}\|\ave{\vq\cn\th}\|_{\H^{m-2}}+T\|\vl-\vp\|_{\H^{m-3}} \|\th\|_{\H^{m}},\end{aligned} \ee
 where the last estimate is due to \eqref{T:bounds} and Sobolev inequalities.
 
To estimate the time-average $\ave{\vq\cn\th}$, we
perform integrating by parts  and use the transport equation of $\th$ itself,
\[\ave{\vq\cn\th}=\ave{\vq}\cn\th\Big|_0^T-\intT\ave{\vq}\cn\pt\th=\ave{\vq}\cn\th\Big|_0^T+\intT\ave{\vq}\cn(\vv\cn\th)\]
   Take its $\H^{m-2}$ norm, apply   \eqref{es:vq:9}  and Sobolev inequalities   to get
\be\label{es:xi:0} \|\ave{\vq\cn\th}\|_{\H^{m-2}}\lsm \ep^2(\Eto+\Eo^2)\esup\|\th\|_{\H^{m}}(1+T\esup \| \vv \|_{\H^{m}}).\ee
 Finally, the estimation of $\|\th\|_{\H^{m}}$ follows   the standard energy method  together with the same mollification for proving vorticity estimate \eqref{es:brom},  $$ \|\th\|_{\H^{m}}^2\Big|_{t_1}^{t_2}\lsm \int_{t_1}^{t_2}\|\vv\|_{\H^{m}} \|\th\|_{\H^{m}}^2 \implies \|\th\|_{\H^m}(T) \le \|\th_0\|_{\H^m}e^{CT\esup \| \vv \|_{\H^{m}}}.$$  
 Therefore, in \eqref{theta:almost}, apply this estimate together   with \eqref{es:xi:0}, \eqref{main:almost} and \eqref{T:bounds}   to finish the proof of the very last inequality of Corollary \ref{thm:C}.

 For future studies, we   like to comment on the possibilities of sharpening the error estimates for practical use such as numerical analysis, because the incompressible approximation is   ubiquitously important. One aspect is to get some good bounds on the inequality constants $C^*,T^*$ etc. This can benefit from using optimal constants in the Sobolev inequalities, making all $\lsm$ relations explicitly $\le$ relations. In addition, for the easier case     $\pa\Omega=\emptyset$, one can drastically reduce the steps of the energy method  in Section \ref{sec:a:priori}, potentially reducing constants as well. Another aspect is to utilize dispersive and/or dissipative mechanisms  which   the current article does not reply on. It will be very interesting to see what role they can play when combined with time-averaging. 
  
 Furthermore, we note that it is   easy to extend our techniques to domains living in two and three dimensional Riemannian manifolds, in which case two major      analytical tools   remain valid: Stokes' theorem as generalization of Divergence Theorem    and Sobolev inequalities. Also, calculations carried out in this article  mostly rely on a handful of coordinate-independent  operators, i.e. $\nabla, \nabla\ccdot,\nt,\vv\cdot\!\nabla,\Delta$. Then, our results   and techniques can be applied to  interesting areas  such as geophysical fluid dynamics on a sphere and relativistic fluid dynamics.

\section*{\bf Appendix}
We prove the Sobolev (type) inequality  as described in Remark \ref{re:Sobolev}, that is, on a smooth, compact domain in two or three dimensions,
\be\label{App:Sob}\Bigl\|\prod_{i=1}^j\px^{\beta_i}f_i \Bigr\|_{\LL}\lsm \prod_{i=1}^j\Bigl\|f_i\Bigr\|_{\H^k} \ee 
if   one of the following conditions hold
 \[\left\{\begin{aligned}&|\beta_1+...+\beta_j|\le k,\;\;\text{ and }\;\; k\ge2,\;\;\text{ or }\\
 &|\beta_1+...+\beta_j|= k+1,\;\; \max\{|\beta_1|,\ldots,|\beta_j|\}\le k,\;\;\text{ and }\;\; k\ge3.\end{aligned}\right.\]

For convenience, assume $k\ge|\beta_1|\ge|\beta_2|\ge\cdots|\beta_j|$.  First, it is an easy exercise of proof by contradiction (for example) to show that, under the above hypotheses,  we must have
\be\label{beta:3}   |\beta_3|\le k-2,\ee
and one of the following scenarios  must occur
 \be\label{beta:1:2}\left\{\begin{aligned}&|\beta_2|\le k-2,\;\;\text{ or }\\&|\beta_1|= |\beta_2|= k-1.\end{aligned}\right.\ee

Next, by H\"older's inequality,
\[\Bigl\|\prod_{i=1}^j\px^{\beta_i}f_i \Bigr\|_{\LL}\le\Bigl\| \px^{\beta_1}f_1\px^{\beta_2}f_2 \Bigr\|_{\LL} \prod_{i=3}^j\Bigl|\px^{\beta_i}f_i\Bigr|_{\sL^\infty}.\]
Then, by \eqref{beta:3} and Sobolev embedding $\H^2\subset \sL^\infty$, we can relax all the above $\sL^\infty$ norms to $\H^k$ norms and therefore, the proof of \eqref{App:Sob} is reduced to proving
\be\label{App:1:2}\Bigl\| \px^{\beta_1}f_1\px^{\beta_2}f_2 \Bigr\|_{\LL}\lsm \Bigl\|f_1\Bigr\|_{\H^k} \Bigl\|f_2\Bigr\|_{\H^k}\quad\text{ if one of \eqref{beta:1:2} holds}.  \ee
 
Under the first scenario of \eqref{beta:1:2}, we simply apply as before the same   combination of H\"older's inequality and Sobolev embedding $\H^2\subset \sL^\infty$ to finish the proof of \eqref{App:1:2}. Under the second scenario of \eqref{beta:1:2}, apply a different version of H\"older's inequality
\[\Bigl\| \px^{\beta_1}f_1\px^{\beta_2}f_2 \Bigr\|_{\LL}\le \Bigl\| \px^{\beta_1}f_1\Bigr\|_{\sL^4}\Bigl\|\px^{\beta_2}f_2 \Bigr\|_{\sL^4}\]
and a different version of Sobolev embedding $\H^1\subset \sL^4$ to finish the proof of \eqref{App:1:2}. 

\section*{\bf Acknowledgement}

B. Cheng would like to thank I. Gallagher for very insightful discussion while both were participating the ``Mathematics for the Fluid Earth'' programme at the Isaac Newton Institute, Autumn 2013.  BC   thanks INI for its hospitality  and   is very grateful to the organizers of this programme for offering a Visiting Fellowship.

BC is also thankful for  stimulating discussions with and expertise from  H. Beir\~{a}o da Veiga,  M. Cullen, S. Jin,  M. Oliver, I. Roulstone,  P. Secchi, S. Schochet, E. Tadmor, M. Tang, C. Xie,  B. Wingate,  S. Zelik, C. Zeng and the anonymous referee(s).

BC gratefully acknowledges administrative support by the Department of Mathematics and the University of Surrey, which   particularly expedited  the accomplishment of this work. 
 
  \end{document}